\setlist[enumerate]{labelsep=*, leftmargin=1.5pc}
\setlist[enumerate]{label=\normalfont(\roman*), ref=\roman*}
\def\MR#1{}
\newtheorem{thm}{Theorem}[section]
\newtheorem{lemma}[thm]{Lemma}
\newtheorem{cor}[thm]{Corollary}
\newtheorem{prop}[thm]{Proposition}
\theoremstyle{definition}
\newtheorem{remark}[thm]{Remark}
\newtheorem{definition}[thm]{Definition}
\newtheorem{conjecture}[thm]{Conjecture}
\newtheorem{question}[thm]{Question}
\numberwithin{equation}{section}
\newcommand{\Z}{\mathbb{Z}}
\newcommand{\Q}{\mathbb{Q}}
\newcommand{\R}{\mathbb{R}}
\DeclareMathOperator{\GL}{GL}
\DeclareMathOperator{\Sym}{Sym}
\newcommand{\abs}[1]{\vert{#1}\vert}
\renewcommand{\gcd}[1]{\operatorname{gcd}\mleft\{{#1}\mright\}}
\DeclareMathOperator{\ehr}{L}
\newcommand{\orig}{\boldsymbol{0}}
\renewcommand{\o}[1]{o_{#1}}
\newcommand{\vol}[1]{\operatorname{vol}\mleft({#1}\mright)}
\newcommand{\bdry}[1]{\partial{#1}}
\newcommand{\intr}[1]{{#1}^\circ}
\newcommand{\Vol}[1]{\operatorname{Vol}\mleft({#1}\mright)}
\newcommand{\V}[1]{\operatorname{vert}\mleft({#1}\mright)}
\renewcommand{\dim}[1]{\operatorname{dim}\mleft({#1}\mright)}
\newcommand{\sconv}[1]{\operatorname{conv}\mleft\{{#1}\mright\}}
\newcommand{\conv}[1]{\operatorname{conv}\mleft({#1}\mright)}
\newcommand{\cA}{\mathcal{A}}
\newcommand{\cB}{\mathcal{B}}
\newcommand{\cP}{\mathcal{P}}
\newcommand{\cR}{\mathcal{R}}
\newcommand{\cS}{\mathcal{S}}
\newcommand{\cW}{\mathcal{W}}
\newcommand{\ulambda}{\underline{\lambda}}
\newcommand{\h}{\delta}
\newcommand{\otherh}{h^*}
\newcommand{\oddrow}{\rowcolor[gray]{0.95}}
\newcommand{\evnrow}{}
\begin{document}
\author[G.\,Balletti]{Gabriele Balletti}
\address{Department of Mathematics\\Stockholm University\\SE-$106$\ $91$\ Stockholm\\Sweden}
\email{balletti@math.su.se}
\author[A.\,M.\,Kasprzyk]{Alexander M.~Kasprzyk}
\address{School of Mathematical Sciences\\University of Nottingham\\Nottingham\\NG7 2RD\\UK}
\email{a.m.kasprzyk@nottingham.ac.uk}
\keywords{Lattice polytope; two interior points; dimension three; Ehrhart polynomial}
\subjclass[2010]{52B20 (Primary); 52B10 (Secondary)}
\title{Three-dimensional lattice polytopes with two interior lattice points}
\begin{abstract}
We classify the three-dimensional lattice polytopes with two interior lattice points. Up to unimodular equivalence there are $22,\!673,\!449$ such polytopes. This classification allows us to verify, for this case only, a conjectural upper bound for the volume of a lattice polytope with interior points, and provides strong evidence for new conjectural inequalities on the coefficients of the Ehrhart $\h$-polynomial in dimension three.
\end{abstract}
\maketitle
\vspace{-2.2em}
\section{Introduction}\label{sec:intro}
We begin by fixing our notation. A \emph{lattice polytope} $P \subset\Z^d\otimes_\Z\R\cong\R^d$ is the convex hull of finitely many points in $\Z^d$. The set of vertices of $P$ is denoted by $\V{P}$, the relative boundary by $\bdry{P}$, and the relative (strict) interior by $\intr{P}$. The polytope $P$ is said to be a \emph{$k$-point polytope} if $\abs{\intr{P}\cap\Z^d}=k$. A $0$-point polytope is called \emph{hollow}. Two lattice polytopes $P,Q\subset \R^d$ are said to be \emph{unimodular equivalent} if there exists an affine lattice automorphism $\varphi\in\GL_d(\Z)\ltimes\Z^d$ of $\Z^d$ such that $\varphi_\R(P)=Q$, and we will typically consider polytopes only up to unimodular equivalence. Although a lattice polytope $P$ need not be of full dimension, we will assume that $\dim{P}=d$ unless stated otherwise; if we need to emphasise that $\dim{P}=n$ we will refer to $P$ as an $n$-polytope.

Much work has focused on developing explicit classifications of lattice polytopes. Perhaps the most celebrated example is the classification of all $473,\!800,\!776$ four-dimensional reflexive polytopes (a special class of $1$-point lattice polytopes) by Kreuzer--Skarke~\cite{KS00}. This classification was motivated by applications in theoretical physics and string theory, and to the study of smooth Calabi--Yau manifolds. Motivated by questions in algebraic geometry, all $674,\!688$ three-dimensional $1$-point lattice polytopes were classified in~\cite{Kas10}; these polytopes correspond to the toric Fano threefolds having at worst canonical singularities. Another special class of $1$-point lattice polytopes important in toric geometry are the smooth Fano polytopes. These have been classified up to dimension nine: in dimension three by Batyrev~\cite{Bat81} and Watanabe--Watanabe~\cite{WW82}, in dimension four by Batyrev~\cite{Bat99} and Sato~\cite{Sat00}, in dimension five by Kreuzer--Nill~\cite{KN09}, and finally an efficient algorithm for arbitrary dimensions was described by {\O}bro~\cite{Obr07}. More recently, Blanco--Santos have classified those three-dimensional lattice polytopes with $\abs{P\cap\Z^3}\leq 11$ and lattice width at least two~\cite{BS16_1,BS16_2,BS16_3}.

The $2$-point lattice polygons (that is, the two-dimensional polytopes with $\abs{\intr{P}\cap\Z^2}=2$) were classified by Wei--Ding~\cite{WD12}. In this paper we derive a complete classification of all $2$-point three-dimensional polytopes; the classification is summarised in the following theorem:

\begin{thm}\label{thm:main}
Up to unimodular equivalence there are exactly $22,\!673,\!449$ $2$-point three-dimensional lattice polytopes. Of these, $471$ are simplices and $162,\!479$ are simplicial. The simplex
\[
S_2^3=\sconv{(0,0,0),(2,0,0),(0,3,0),(0,0,18)}
\]
is the unique polytope maximising the volume. This same simplex also uniquely maximises both the boundary volume and number of lattice points, with
\[
\Vol{S_2^3}=108,\qquad\Vol{\bdry{S_2^3}}=102,\qquad\text{ and }\qquad\abs{S_2^3\cap\Z^3}=55.
\]
The maximum number of vertices, edges, and facets are, respectively, $18$, $30$, and $18$. After translating an interior lattice point to the origin, the largest dual volume is obtained by the $2$-point polytope
\[
P=\sconv{(-1,-1,-1),(0,-1,-1),(-1,0,-1),(7,7,8)},\qquad\text{ with }\qquad\Vol{P^*}=243/2.
\]
\end{thm}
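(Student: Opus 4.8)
The plan is threefold: (i) a structural reduction confining $2$-point $3$-polytopes to a finite, explicitly enumerable family; (ii) a computer enumeration of that family; and (iii) reading off the extremal data from the resulting list. For (i) the crucial input is that the lattice width of a lattice $d$-polytope with a bounded number of interior lattice points is itself bounded; in particular a $2$-point $3$-polytope $P$ has small lattice width $w$, and since a width-one $3$-polytope is hollow we have $2\le w\le W$ for an explicit small constant $W$. Fix a width-realising lattice functional $\ell$ with $0\le\ell\le w$ on $P$. An interior lattice point of $P$ lies at some level $\ell=c$ with $1\le c\le w-1$, and it lies in the relative interior of the polygonal slice $P\cap\{\ell=c\}$; hence that slice is one of the finitely many lattice polygons with one or two interior lattice points, the latter classified by Wei--Ding~\cite{WD12}. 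Since $P$ has only two interior lattice points the remaining slices $P\cap\{\ell=k\}$ are severely constrained, and a finite case analysis bounds every slice, hence bounds $\Vol{P}$, and — more usefully — exhibits $P$ as a slab-bounded, controlled extension of a polygon from a known list. This is the step I expect to be the main obstacle: turning ``the width is bounded'' into bounds tight enough that the enumeration actually terminates in practice requires careful, symmetry-aware bookkeeping rather than crude estimates (the general volume bounds of Hensley, Lagarias--Ziegler and Pikhurko are astronomically too weak here).

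It is convenient to organise (ii) around \emph{maximal} $2$-point $3$-polytopes. If $Q$ is a $2$-point $3$-polytope, then greedily adjoining lattice points that do not change $\intr{Q}\cap\Z^3$ and invoking the volume bound, one reaches a maximal such $M\supseteq Q$; since $Q\subseteq M$ forces $\intr{Q}\cap\Z^3\subseteq\intr{M}\cap\Z^3$ with both sides of size two, $M$ carries the same pair of interior points and is maximal in the literal sense that no $2$-point $3$-polytope strictly contains it. The recursion of the previous paragraph produces the finite — and in practice short — list of maximal $2$-point $3$-polytopes. For each maximal $M$ one then enumerates the three-dimensional subpolytopes $\conv(S)$, $S\subseteq M\cap\Z^3$, retaining exactly those with two interior lattice points; since $\abs{M\cap\Z^3}$ can be large this is done not by running over all subsets but by a pruned search on the poset of subpolytopes (successively deleting vertices), using the automorphism group of $M$ to cut off symmetric branches, exactly in the spirit of the $1$-point classification of~\cite{Kas10}.

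Distinct maximal polytopes yield overlapping output, so every polytope produced is reduced to a canonical representative for the action of $\GL_3(\Z)\ltimes\Z^3$ (a normal form in the manner of Kreuzer--Skarke~\cite{KS00}), stored, and deduplicated by hashing; the number of survivors is asserted to be $22{,}673{,}449$, from which a direct tally gives $471$ simplices and $162{,}479$ simplicial polytopes. Finally one passes once more over the complete list, recording for each polytope its volume, boundary volume, number of lattice points, $f$-vector, and — after translating each of its two interior lattice points to $\orig$ in turn — the volume of the dual polytope. From this one reads off that $S_2^3$ uniquely attains the maximal volume $108$, and simultaneously the maximal boundary volume $102$ and the maximal lattice-point count $55$; that the maximal numbers of vertices, edges and facets are $18$, $30$ and $18$; and that the maximal dual volume $\Vol{P^*}=243/2$ is attained by the stated simplex $P$. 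The residual caveats are the customary ones for a classification of this size: the correctness of the implementation and the reproducibility of a computation producing tens of millions of polytopes, to be addressed by publishing the source code and the resulting database.
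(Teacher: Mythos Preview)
Your proposal takes a genuinely different route from the paper. The paper does not use lattice width or maximal polytopes at all; it grows the classification from below. First it determines the weight sets $\cW_{n,k}$ of all $k$-point $n$-simplices for $k\in\{1,2\}$, $n\le 3$ (Sections~\ref{sec:weights1}--\ref{sec:weights2}), and separately the \emph{minimal} three-dimensional non-hollow polytopes, proving (Proposition~\ref{prop:small}) that any such is either a minimal $1$-point polytope or a $2$-point tetrahedron with collinear interior points. From these seeds, Proposition~\ref{prop:grow} shows that for any $P$ with at most $K$ interior lattice points, the set of lattice points $v$ with $\abs{\intr{\conv{P\cup\{v\}}}\cap\Z^d}\le K$ is finite and explicitly computable from the weight tables: the line through an interior point $o$ and $v$ hits a simplex $F$ of a boundary triangulation, and $v$ is then determined by $o$, the vertices of $F$, and a weight in $\cW_{\dim{F}+1,k}$. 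Iterating this ``grow'' step with normal-form deduplication yields the full list. Your scheme inverts this---first locate the maximals, then delete vertices---and traverses the same poset in the opposite direction; the paper's choice has the practical advantage that one never needs to identify the maximals in advance, nor any a~priori width or volume bound.

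There is also a concrete gap in your step~(i). The slice $P\cap\{\ell=c\}$ is a \emph{rational} polygon, not in general a lattice polygon, so the Wei--Ding classification of $2$-point lattice polygons does not apply to it. Passing to the convex hull of its lattice points does yield a lattice polygon, but then the interior lattice point of $P$ at height $c$ need not lie in the relative interior of that hull---it may be a vertex or boundary point---so the argument as written does not bound the slice. Width-based enumerations of this flavour can be made to work (Blanco--Santos do something in this spirit), but you have correctly flagged exactly this step as the main obstacle without actually resolving it. The paper's weight-based growth sidesteps the issue entirely: neither a width bound nor a slice analysis is ever invoked.
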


In addition to being interesting in its own right, the classification of $2$-point three-dimensional polytopes is useful when studying Ehrhart theory. We explain why this is important in~\S\ref{subsec:dim3}; loosely speaking, the Ehrhart theory of hollow three-dimensional polytopes is well understood, however the case of $k$-point lattice polytopes ($k\geq 1$) remains open. In~\S\ref{sec:last} we formulate two new conjectural inequalities based on the classifications of $1$- and $2$-point polytopes (see Conjecture~\ref{conj:3d}). If true, these inequalities would prove a long-standing conjectural bound on the maximum volume of a $k$-point polytope in three dimensions (see Conjecture~\ref{conj:volume_bound}).

We briefly mention an important possible application of this classification to algebraic geometry. Let $P$ be a $2$-point polytope and, after possible translation, assume that the origin is one of the two interior lattice points. Assume further that the vertices $\V{P}$ of $P$ are primitive. Then $P$ corresponds to a three-dimensional projective toric variety $X_P$ whose fan is given by the cones spanning the faces of $P$. This variety is Fano and has log-terminal singularities. Recent progress in Mirror Symmetry for Fano varieties~\cite{CCGK16} suggests that many (not necessarily toric) terminal Fano threefolds $X$ will have $\Q$-Gorenstein degenerations to log-terminal toric Fano varieties $X_P$. The classification of $2$-point polytopes provides a source of possible toric $\Q$-Gorenstein degenerations. This opens the possibility of analysing the candidate terminal Fano threefolds of Alt{\i}nok--Brown--Reid~\cite{ABR02} via techniques from Mirror Symmetry: by matching the Hilbert series of a candidate terminal Fano threefold $X$ with that of $X_P$, where $P$ is a $2$-point polytope, one could attempt to exhibit the existence (or otherwise) of $X$.

\subsection{Ehrhart theory}
Let $P$ be a full-dimensional lattice polytope in $\Z^d$. The function $\ehr_P(k):=\abs{kP\cap\Z^d}$ counting the number of lattice points in the $k$-th dilation of $P$, $k\in\Z_{\geq 0}$, is given by a polynomial in $k$ of degree $d$ called the \emph{Ehrhart polynomial} of $P$~\cite{Ehr62}. An important open problem is to determine which degree $d$ polynomials correspond to Ehrhart polynomials. Associated with the Ehrhart polynomial is a rational function whose Taylor expansion gives a generating series for $\ehr_P$:
\[
\sum_{k\geq 0}\ehr_P(k)t^k=\frac{\h(t)}{(1-t)^{d+1}}.
\]
Here $\h(t)=\h_0+\h_1 t+\dots+\h_d t^d$ is a polynomial of degree at most $d$ with integer coefficients called the \emph{Ehrhart $\h$-polynomial} (or \emph{$\otherh$-polynomial}) of $P$. It is often convenient to identify the $\h$-polynomial with the vector of its coefficients $(\h_0,\h_1,\ldots,\h_d)$, which is called the \emph{Ehrhart $\h$-vector} (or \emph{$\otherh$-vector}) of $P$. We refer to~\cite{BR15} for additional background material.

\begin{question}\label{question}
For each $d$, is it possible to characterise those vectors $(\h_0,\h_1,\ldots,\h_d)$ which are $\h$-vectors for some $d$-dimensional lattice polytope? 
\end{question}

Although characterising the Ehrhart polynomials $L_P$ or the $\h$-vectors are equivalent problems, the $\h$-vectors have a better understood combinatorial interpretation. Ehrhart~\cite{Ehr62} showed that:
\begin{equation}\label{eq:ehr}
\h_0=1,\qquad
\h_1=\abs{P\cap\Z^d} - d - 1\geq\abs{\intr{P}\cap\Z^d}=\h_d,\qquad
\sum_{i=0}^d\h_i=\Vol{P}.
\end{equation}
Here $\Vol{P}=d!\vol{P}$ is the \emph{normalised volume} of the polytope $P$. If $S\subset\R^d$ is a $d$-dimensional simplex with vertices $\{v_0,\ldots,v_d\}$, then $\h_k$ counts the number of lattice points in the half-open parallelepiped
\[
\Pi(S):=\left\{\sum_{i=0}^d\lambda_i (1,v_i)\ \Big|\ 0\leq\lambda_i < 1\right\}\subset\R^{d+1}
\]
having first coordinate equal to $k$. This can be generalised to polytopes using the half-open triangulations approach of K{\"o}ppe--Verdoolaege~\cite{KV08}. As a consequence, the $\h$-vectors have non-negative entries (a result originally due to Stanley~\cite{Sta80} using techniques from commutative algebra).

In two dimensions the answer to Question~\ref{question} was first given by Scott~\cite{Sco76}:

\begin{thm}\label{thm:scott}
The vector with integer entries $(1,\h_1,\h_2)$ is the $\h$-vector of a two-dimensional lattice polygon if and only if one of the following conditions holds:
\begin{center}
\begin{enumerate*}[afterlabel={\ \ },itemjoin={\qquad or \qquad}]
\item $\h_2=0$;
\item $0 < \h_2\leq\h_1\leq 3\h_2+3$;
\item\label{case:scott_3} $(1,\h_1,\h_2)=(1,7,1)$.
\end{enumerate*}
\end{center}
In case~\eqref{case:scott_3} the polygon is unimodular equivalent to $3\Delta_2$, the third dilation of the standard simplex
\[
\Delta_2:=\sconv{(0,0),(1,0),(0,1)}.
\]
\end{thm}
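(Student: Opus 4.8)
The plan is to recast the problem via Pick's theorem and then reduce the nontrivial direction to Scott's inequality, following the classical approach of~\cite{Sco76}. For a lattice $2$-polytope $P$ put $i:=\abs{\intr{P}\cap\Z^2}$ and $b:=\abs{\bdry{P}\cap\Z^2}$. Pick's theorem together with~\eqref{eq:ehr} gives $\h_0=1$, $\h_1=i+b-3$, $\h_2=i$, and $\Vol{P}=2i+b-2$. Since $\h_1,\h_2\geq 0$ necessarily and every $2$-polytope has $b\geq 3$, the vector with integer entries $(1,\h_1,\h_2)$ is an $\h$-vector if and only if some lattice polygon has exactly $i=\h_2$ interior and $b=\h_1-\h_2+3$ boundary lattice points; in particular this forces $\h_1\geq\h_2$, which is the Ehrhart inequality of~\eqref{eq:ehr}. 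Under this dictionary, condition~(i) reads $i=0$ (with $b\geq 3$ arbitrary); condition~(ii) reads $i\geq 1$ and $3\leq b\leq 2i+6$; and condition~\eqref{case:scott_3} reads $(i,b)=(1,9)$, attained by $3\Delta_2$. It therefore remains to prove \emph{(Necessity)} every lattice polygon with $i\geq 1$ has $b\leq 2i+6$ except $3\Delta_2$, for which $(i,b)=(1,9)$; and \emph{(Sufficiency)} every admissible pair $(i,b)$ is realised.

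For necessity — which is Scott's inequality $b\leq 2i+7$ together with the classification of its equality case — I would peel off the interior hull. Set $Q:=\conv{\intr{P}\cap\Z^2}$ and let $i_Q,b_Q$ be its numbers of interior and boundary lattice points. Since $Q\subseteq\intr{P}$, every lattice point of $Q$ is an interior point of $P$, so $i\geq i_Q+b_Q$. When $\dim{Q}=2$ one establishes the complementary bound $b\leq b_Q+9$; combining the two gives $b-2i\leq (b_Q+9)-2(i_Q+b_Q)=9-b_Q-2i_Q\leq 6$, since $b_Q\geq 3$. Hence $\dim{Q}=2$ already yields the strict inequality $b\leq 2i+6$, so equality $b=2i+7$ can occur only when $\dim{Q}\leq 1$. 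These two low-dimensional cases are finite to analyse: if $\dim{Q}=0$ then $i=1$ and $P$ is, after translating the interior point to the origin, one of the sixteen reflexive polygons, and direct inspection shows $b\leq 9$ with equality only for $3\Delta_2$; if $\dim{Q}=1$ then $i\geq 2$ and all interior points are collinear, so $P$ is confined to a bounded strip about the line through $Q$ and a short computation gives $b\leq 2i+6$. This establishes necessity.

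For sufficiency it is enough to write down explicit families. When $i=0$ the triangle $\sconv{\orig,(b-2,0),(0,1)}$ is hollow with exactly $b$ boundary lattice points, for every $b\geq 3$. When $i\geq 1$, the pair $(1,9)$ is realised by $3\Delta_2$, the top of each range $b=2i+6$ is realised by the rectangle $\sconv{(0,0),(i+1,0),(i+1,2),(0,2)}$, and some primitive triangle realises $b=3$; the intermediate values are obtained by interpolating constructions, for instance by repeatedly attaching a unimodular ``ear'' along a primitive boundary edge, which raises $b$ by one while leaving $i$ unchanged. Checking that these families cover every admissible pair is routine.

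The main obstacle is the quantitative estimate $b\leq b_Q+9$ when the interior hull $Q$ is two-dimensional: this is the heart of Scott's inequality and requires a careful charging argument bounding $\bdry{P}$ in terms of $\bdry{Q}$ — equivalently, controlling the number of lattice points in the annular region $P\setminus Q$ via a half-open triangulation or a Pick-type count. The remaining work — the finite analysis when $\dim{Q}\leq 1$, and the verification that $3\Delta_2$ uniquely attains $b=2i+7$ — is elementary but must be carried out with care; in particular, in the $\dim{Q}=0$ case one should invoke the classification of the sixteen reflexive polygons as an input independent of the volume bound, to avoid a circular argument.
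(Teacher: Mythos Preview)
The paper does not prove Theorem~\ref{thm:scott}; it is quoted from Scott~\cite{Sco76} as background, with no argument supplied. There is therefore no in-paper proof to compare your attempt against.

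Assessing your proposal on its own: it is a plan rather than a proof, and it mislabels the method. Scott's original argument in~\cite{Sco76} does not peel the interior hull $Q=\conv{\intr{P}\cap\Z^2}$; it normalises $P$ into a horizontal strip of minimal lattice width and bounds $b$ directly via convexity of the slices. The interior-hull route you outline is a different, later approach. Its crux is the estimate $b\leq b_Q+9$ when $\dim{Q}=2$, which you rightly flag as ``the main obstacle'' but leave unproved. That inequality is true and sharp (equality for $k\Delta_2$, $k\geq 4$), but establishing it requires essentially the same geometric control of $P$ relative to $Q$ that Scott's inequality itself encodes, so the reduction does not obviously simplify matters. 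The $\dim{Q}=1$ case also hides real work: you must first argue that collinearity of the interior points forces $P$ into a strip of lattice width two about that line, which is not immediate from the definition. On the sufficiency side, attaching a unimodular ear along a primitive edge can destroy convexity (the new apex may create a reflex angle at an adjacent vertex), so the interpolation from $b=3$ up to $b=2i+6$ needs a more careful construction --- explicit trapezoid or triangle families do the job cleanly.
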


Although the answer to Question~\ref{question} in higher dimensions remains open, several inequalities on the entries of the $\h$-vector are known~\cite{BM85,Hib90,Sta91,Sta09,Sta16}. Of particular relevance is a result due to Hibi~\cite{Hib94}: if $P$ is not hollow then
\begin{equation}\label{ineq:hibi}
1\leq\h_1\leq\h_i\qquad\text{ for }i=2,\ldots,d-1.
\end{equation}

In arbitrary dimension, one of the most interesting challenges is to bound the volume $\Vol{P}$ of a lattice polytope $P$ in terms of the number of interior points $\abs{\intr{P}\cap\Z^d}$, or equivalently, bound $\h_1+\cdots+\h_{d-1}$ in terms of $\h_d$. This is of course not possible if $P$ is hollow; for example, the hollow triangle $\sconv{(0,0),(m,0),(0,1)}$ has volume $m$ and so can be made arbitrarily large. In the case when $P$ is not hollow, the first general result was proven by Hensley~\cite{Hen83}, and later improved upon by Lagarias--Ziegler~\cite{LZ91} and Pikhurko~\cite{Pik01}:

\begin{thm}\label{thm:Pik}
Let $P$ be a $d$-dimensional $k$-point polytope, $k\geq 1$. Then:
\begin{equation}\label{eq:Pik}
\Vol{P}\leq d!\cdot(8d)^d\cdot 15^{d\cdot 2^{2d+1}}\cdot k
\end{equation}
\end{thm}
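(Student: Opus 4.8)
The plan is to follow the route of Hensley, Lagarias--Ziegler and Pikhurko, which splits into two largely independent parts: a reduction of the general $k$-point case to the case of a single interior lattice point, and the core volume bound for $1$-point polytopes, where all of the doubly exponential growth originates.

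First I would reduce to $k=1$. Write $V(d)$ for the maximal normalised volume of a $1$-point $d$-polytope, and suppose for the moment that $V(d)$ is finite and satisfies the asserted bound. Given a $k$-point $d$-polytope $P$, the idea is to peel off its interior lattice points one at a time: one builds a nested chain $P=P_0\supseteq P_1\supseteq\dots$ of lattice $d$-polytopes, extracting at each step a lattice $d$-polytope that contains exactly one interior lattice point of $P$ and hence has normalised volume at most $V(d)$, so that summing over all $k$ interior points gives $\Vol{P}\leq k\cdot V(d)$. Concretely one can slice $P$ with a suitable family of half-spaces, each separating off a single interior lattice point, or argue by induction on $k$, at each step removing one interior lattice point together with a bounded neighbourhood of it; the only thing to monitor is that no more than a $d$-dependent constant factor is lost, which is comfortably absorbed by the $(8d)^d$ term.

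It remains to bound $V(d)$. Translate the unique interior lattice point of $P$ to the origin. The naive induction \emph{on the facets} of $P$ is a dead end, since a facet of a $1$-point polytope is a hollow $(d-1)$-polytope and so of unbounded volume; instead, one natural route (following Lagarias--Ziegler and Pikhurko) is to reduce first to the case of a \emph{simplex} $S$ with the origin in its interior and $\intr{S}\cap\Z^d=\{\orig\}$, at the cost of only a $d$-dependent factor in the volume, and then bound $\Vol{S}$ directly. After a unimodular change of coordinates, $S$ is encoded by a tuple of positive integer weights, and the requirement that the origin be the \emph{only} interior lattice point of $S$ becomes a Diophantine condition on those weights --- of the same flavour as asking how close a sum of $d+1$ unit fractions can come to $1$ from below. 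The extremal configurations are those attached to Sylvester's sequence $2,3,7,43,1807,\dots$, governed by the recursion $a_{n+1}=a_n^2-a_n+1$ and hence growing doubly exponentially; it is exactly this doubly exponential behaviour that is reflected --- in deliberately crude form --- by the term $15^{d\cdot 2^{2d+1}}$ in the bound.

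The main obstacle is the $1$-point case, and two points within it genuinely need care. The first is the reduction from an arbitrary $1$-point polytope to a $1$-point simplex without losing more than a $d$-dependent factor: one must ensure the extracted simplex still contains the interior lattice point \emph{strictly} inside, not merely on one of its faces, which forces some work. The second is upgrading the ``unit fraction'' heuristic to a rigorous induction on $d$, uniform over the congruence conditions that accompany the Diophantine constraint, so that Sylvester's sequence really does bound the weights. By comparison, the passage from $k=1$ to general $k$ is comparatively soft, although it still requires the peeling to be set up so that each extracted piece is an honest lattice $d$-polytope with exactly one interior lattice point.
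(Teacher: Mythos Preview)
The paper does not prove this theorem; it is quoted from Pikhurko (building on Hensley and Lagarias--Ziegler), and no argument is supplied in the text. So there is no proof here to compare your proposal against.

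Your sketch is broadly in the spirit of those references, and the second half --- passing to a simplex with the origin in its interior and bounding its weights via a Sylvester-type unit-fraction argument, whence the doubly exponential $15^{d\cdot 2^{2d+1}}$ --- is the right story. The part that does not work as written is your reduction from $k$ interior points to one. The ``peeling'' you propose would require decomposing $P$ into lattice $d$-polytopes, each with a single interior lattice point, whose volumes sum to at most (a $d$-dependent constant times) $\Vol{P}$; but slicing $P$ by a hyperplane does not in general produce lattice polytopes, since the cut need not meet $\bdry{P}$ in lattice points, and I do not see any decomposition of this shape that one can guarantee. Nor can you simply absorb a putative $d$-dependent loss into the $(8d)^d$ factor: that factor is already part of Pikhurko's bound, not slack held in reserve for a later reduction.

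In the actual arguments of Lagarias--Ziegler and Pikhurko the linear dependence on $k$ is not obtained by a separate $k\to 1$ reduction of this kind. One fixes an interior lattice point $o$ of a simplex $S\subseteq P$ and bounds the smallest barycentric coordinate of $o$ from below; the hypothesis ``at most $k$ interior lattice points'' enters that lower bound directly and contributes a factor $1/k$ there, which becomes the factor $k$ in the volume. The passage from a general polytope to such a simplex contributes only a $d$-dependent factor. So the fix is not to repair the peeling, but to carry $k$ through the barycentric step from the outset.
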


The bound in Theorem~\ref{thm:Pik} appears to be far from sharp. With this in mind, Zaks--Perles--Wilkes~\cite{ZPW82} defined the $d$-dimensional simplex
\begin{equation}\label{eq:ZPW_simplex}
S^d_k:=\sconv{\orig,s_1 e_1,\ldots,s_{d-1}e_{d-1},(k+1)(s_d-1)e_d},\qquad\text{ where }k\geq 1.
\end{equation}
Here $(s_i)_{i\in\Z_{\geq 1}}$ is the \emph{Sylvester sequence}
\[
s_1=2,\qquad s_i=s_1\cdots s_{i-1}+1.
\]
It is reasonable to conjecture that, for fixed $d$ and $k$, the simplex $S_k^d$ maximises the volume amongst all $k$-point $d$-dimensional polytopes. Hints of this conjecture can be tracked back to~\cite{ZPW82}, \cite{Hen83} and \cite{LZ91}.

\begin{conjecture}\label{conj:volume_bound}
Fix $d\geq 3$ and $k\geq 1$. A $k$-point $d$-dimensional lattice polytope $P$ satisfies
\begin{equation}\label{eq:conj}
\Vol{P}\leq (k+1)(s_d-1)^2.
\end{equation}
With the exception of the case when $d=3$, $k=1$, this inequality is an equality if and only if $P=S^d_k$.
\end{conjecture}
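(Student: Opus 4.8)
The conjecture is open, so what follows is a roadmap: the plan is to attack it one dimension at a time, and within a fixed $d$ to split the argument according to the size of $k$. The first step is to rewrite~\eqref{eq:conj} in $\h$-vector form. By~\eqref{eq:ehr} we have $\h_0=1$, $\h_d=\abs{\intr{P}\cap\Z^d}=k$, and $\sum_{i=0}^d\h_i=\Vol{P}$, so~\eqref{eq:conj} is equivalent to
\[
\h_1+\dots+\h_{d-1}\leq(k+1)\bigl((s_d-1)^2-1\bigr)=(k+1)\,s_d(s_d-2).
\]
Thus the conjecture is precisely a bound on the "interior part" $\h_1+\dots+\h_{d-1}$ of the $\h$-vector in terms of $\h_d$, exactly the quantity discussed in~\S\ref{subsec:dim3}; the equality clause then asks one to pin down the $\h$-vectors, and finally the polytopes, attaining it.

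For $d=3$ we have $s_1=2$, $s_2=3$, $s_3=7$, so the target is $\Vol{P}\leq 36(k+1)$, i.e.\ $\h_1+\h_2\leq 35(k+1)$. For $k\leq 2$ this is settled by existing classifications. When $k=1$ the polytopes are the three-dimensional canonical Fano polytopes, and a direct check of the $674{,}688$ of them from~\cite{Kas10} shows the maximal normalised volume is $72=2\cdot 6^2$, attained by more than one simplex --- which is the source of the exceptional non-uniqueness. When $k=2$ both the bound and the uniqueness of the maximiser follow at once from Theorem~\ref{thm:main}, since $\Vol{S_2^3}=108=3\cdot 6^2$ and $S_2^3$ is the unique volume-maximiser. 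So no further work is needed for $d=3$, $k\leq 2$.

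For $d=3$ and $k\geq 3$ enumeration is out of reach, and here the plan is to establish the $\h$-vector inequalities of Conjecture~\ref{conj:3d} --- which are calibrated precisely against the $k=1,2$ data --- and then read off $\h_1+\h_2\leq 35(k+1)$ as a formal consequence; chasing the equality cases through those inequalities, and showing that equality forces a Sylvester-type simplex, would give the uniqueness statement. In dimensions $d\geq 4$ the natural first move is to reduce to a finite problem: combine the flatness and covering-radius estimates underlying Theorem~\ref{thm:Pik} (Hensley, Lagarias--Ziegler, Pikhurko) with width bounds in the spirit of Blanco--Santos to show that, once $k$ is fixed, a near-maximal $k$-point $d$-polytope has bounded lattice width in every direction, hence sits in a bounded box up to unimodular equivalence; a (huge but finite) search would then, one hopes, single out $S^d_k$, mirroring the three-dimensional picture.

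\textbf{The main obstacle} is everything past $d=3$, $k\leq 2$. Already for $d=3$, $k\geq 3$ the $\h$-vector inequalities of Conjecture~\ref{conj:3d} are themselves conjectural, supported only by the $k\leq 2$ classifications, so turning that evidence into a proof valid for all $k$ is the crux. For general $d$ the situation is far worse: the best unconditional result, Theorem~\ref{thm:Pik}, exceeds the conjectured bound $(k+1)(s_d-1)^2$ by a super-exponential factor, and because $s_d$ itself grows doubly exponentially there is at present no technique that even identifies the right order of magnitude, let alone recognises $S^d_k$ as the unique extremiser.
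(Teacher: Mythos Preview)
Your assessment is correct: the statement is a conjecture, and the paper does not prove it. The paper's only contribution towards it is the verification of the case $d=3$, $k=2$ as an immediate corollary of the classification in Theorem~\ref{thm:main} (together with the $d=3$, $k=1$ case already handled in~\cite{Kas10}), and the observation in~\S\ref{sec:last} that Conjecture~\ref{conj:3d} would imply the three-dimensional case for all $k$. Your roadmap reproduces exactly this structure --- the $\h$-vector reformulation, the appeal to the two classifications for $k\le 2$, and the reduction of the remaining three-dimensional cases to Conjecture~\ref{conj:3d} --- and is honest about the obstacles in higher dimension, so it matches the paper's own stance on the problem.
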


The two-dimensional case is not included in Conjecture~\ref{conj:volume_bound}; here the volume bound follows from Theorem~\ref{thm:scott} and requires a different formulation.  The case when $d=3$, $k=1$ has been addressed in~\cite{Kas10}: in addition to $S^3_1$, the maximum volume of $72$ is also attained by the simplex
\[
\sconv{(0,0,0),(2,0,0),(0,6,0),(0,0,6)}.
\]
Averkov--Kr\"{u}mpelmann--Nill~\cite{AKN15} prove a ``simplicial'' version of Conjecture~\ref{conj:volume_bound} when $k=1$: $S^d_1$ is the unique simplex with maximum volume among all $1$-point $d$-dimensional simplices, $d\geq 4$. A ``dual'' version of Conjecture~\ref{conj:volume_bound} when $k=1$ is proved in~\cite{BKN16} and, as a consequence, Conjecture~\ref{conj:volume_bound} is true when one considers only reflexive polytopes.

\subsection{Dimension three}\label{subsec:dim3}
It makes sense to consider Question~\ref{question} in the first unsolved case, the three-dimensional one. Treutlein~\cite[Theorem~2]{Tre10} generalises Theorem~\ref{thm:scott} to polytopes of degree two, i.e.\ to polytopes whose $\h$-polynomial has degree two. Such polytopes are necessarily hollow. Henk--Tagami show that this generalisation is sufficient~\cite[Proposition~1.10]{Hen09}, giving the following characterisation:

\begin{thm}
The vector with integer entries $(1,\h_1,\h_2)$ is the $\h$-vector of a hollow three-dimensional lattice polytope if and only if one of the following conditions holds:
\begin{center}
\begin{enumerate*}[afterlabel={\ \ },itemjoin={\qquad or \qquad}]
\item $\h_2=0$;
\item $0\leq\h_1\leq 3\h_2+3$;
\item $(1,\h_1,\h_2)=(1,7,1)$.
\end{enumerate*}
\end{center}
\end{thm}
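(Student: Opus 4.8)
The plan is to prove the two implications separately, splitting each time according to whether $\h_2$ vanishes, and to transfer information between dimensions two and three by means of lattice pyramids. The basic observation is that for a lattice $3$-polytope $P$ the identity $\h_3=\abs{\intr{P}\cap\Z^3}$ of~\eqref{eq:ehr} shows that $P$ is hollow precisely when its $\h$-polynomial has degree at most $2$, i.e.\ precisely when the $\h$-vector has the shape $(1,\h_1,\h_2,0)$; since $\h$-vectors have non-negative entries~\cite{Sta80} we may assume $\h_1,\h_2\geq 0$ and record the candidate vector as $(1,\h_1,\h_2)$. \emph{Necessity} is then immediate from known results. If $\h_2=0$ we are in case~(i). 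If $\h_2>0$ then, as $\h_3=0$, the polytope $P$ has degree exactly two, and I would quote Treutlein's theorem~\cite[Theorem~2]{Tre10} --- the degree-two analogue of Scott's Theorem~\ref{thm:scott} --- according to which a lattice polytope of degree two satisfies $\h_1\leq 3\h_2+3$ unless $(\h_1,\h_2)=(7,1)$, the exceptional value already seen in case~\eqref{case:scott_3} of Theorem~\ref{thm:scott}, namely the $\h$-vector of $3\Delta_2$. Hence $(1,\h_1,\h_2)$ satisfies case~(ii) or case~(iii).

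For \emph{sufficiency} I would exhibit, for each admissible $(1,\h_1,\h_2)$, a hollow lattice $3$-polytope realising it. The workhorse is the height-one lattice pyramid: if $Q\subset\R^2$ is a lattice polygon and $v\in\Z^3$ has last coordinate $1$, then $\conv(Q\times\{0\},v)$ is a $3$-polytope with the same $\h$-vector as $Q$, and it is hollow because no lattice point has last coordinate strictly between $0$ and $1$. Combined with Scott's classification (Theorem~\ref{thm:scott}), the pyramid construction realises case~(iii) (take $Q=3\Delta_2$) and all of case~(ii) in the range $\h_2\leq\h_1\leq 3\h_2+3$; and since the hollow triangle $\sconv{(0,0),(\h_1+1,0),(0,1)}$ has $\h$-vector $(1,\h_1,0)$, its pyramid takes care of case~(i). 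The only vectors left are those in case~(ii) with $0\leq\h_1<\h_2$, which by Scott's theorem are realised by no lattice polygon at all. For these I would use an explicit family of hollow lattice $3$-polytopes in the spirit of Henk--Tagami~\cite[Proposition~1.10]{Hen09}: for instance the ``generalised bipyramids'' $\sconv{\orig,e_1,e_2,e_3,(n,1,1)}$ are hollow, contain exactly five lattice points and have normalised volume $n+2$, hence $\h$-vector $(1,1,n)$, which covers every admissible pair with $\h_1=1$; enlarging such polytopes by further boundary lattice points (placed along a coordinate axis so as to preserve hollowness) then raises $\h_1$ in a controlled fashion, and one verifies that the family so obtained sweeps out exactly the region $0\leq\h_1<\h_2$.

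The main obstacle is this last sufficiency step: pinning down a family of hollow $3$-polytopes that hits every sub-diagonal pair $\h_1<\h_2$ with no gaps and --- equally importantly --- produces nothing outside the claimed region, together with the bookkeeping (hollowness, normalised volume, lattice-point count) needed to read off each member's $\h$-vector. The other non-trivial ingredient is Treutlein's inequality with its sharp exceptional value $(1,7,1)$. Both are supplied by~\cite{Tre10} and~\cite{Hen09} respectively, and combining them with the elementary reduction afforded by the identity $\h_3=\abs{\intr{P}\cap\Z^3}$ yields the theorem.
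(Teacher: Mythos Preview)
Your proposal is correct and matches the paper's approach: the paper does not give a self-contained proof but simply attributes necessity to Treutlein~\cite[Theorem~2]{Tre10} and sufficiency to Henk--Tagami~\cite[Proposition~1.10]{Hen09}, exactly as you do. Your explicit constructions are a helpful elaboration; the only small omission is the sub-diagonal case $\h_1=0$, $\h_2>0$, which is realised by empty non-unimodular lattice tetrahedra rather than by your bipyramid family (which starts at $\h_1=1$), but this too is covered by the Henk--Tagami reference.
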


The next natural step is to consider three-dimensional lattice polytopes with interior points, i.e\ such that $\abs{\intr{P}\cap\Z^3}=\h_3>0$. In this case Lagarias--Ziegler~\cite{LZ91} tell us that, for fixed $\h_3$, there are only finitely many pairs $(\h_1,\h_2)$ such that $(1,\h_1,\h_2,\h_3)$ is the $\h$-vector of a three-dimensional lattice polytope. The inequalities~\eqref{eq:ehr},~\eqref{ineq:hibi}, and~\eqref{eq:Pik} give:
\begin{equation}\label{eq:a}
\h_3\leq\h_1\leq\h_2
\end{equation}
\begin{equation}\label{eq:b}
\Vol{P}\leq 3!\cdot 24^3\cdot 15^{3\cdot 2^7}\cdot\h_3\approx 2^{1517}\cdot\h_3
\end{equation}
Although other bounds are known for the $\h$-vectors of special families of lattice polytopes, inequalities~\eqref{eq:a} and~\eqref{eq:b} summarise the current state of knowledge for the general case. Note that~\eqref{eq:a} is sharp: the polytope $\sconv{(-1,-1,-1),(1,0,0),(0,1,0),(0,0,m)}$ has $\h$-vector $(1,m,m,m)$. Inequality~\eqref{eq:b}, however, is presumed to be far from sharp: Conjecture~\ref{conj:volume_bound} gives the expected bound
\begin{equation}\label{eq:b2}
\Vol{P}\leq 36(\h_3+1).
\end{equation}

\subsection*{Organisation of the paper}
The classification of the $2$-points polytopes is achieved via a generalisation of the inductive algorithm~\cite{Kas10} used to classify the $1$-point polytopes. It is necessary to classify two families of lattice polytopes.
\begin{enumerate}
\item
In~\S\S\ref{sec:weights1}--\ref{sec:weights2} we classify the $1$- and $2$-point simplices of dimension at most three; the weights of these simplices will be used in the inductive step of the algorithm.
\item
In~\S\ref{sec:minimal} we describe all three-dimensional lattice polytopes satisfying a minimality condition (see Definition~\ref{def:minimal}); these minimal polygons form the base case of the algorithm.
\end{enumerate}
In~\S\ref{sec:main} we show the validity of the algorithm and describe the results of the classification, comparing them with other known classifications. In~\S\ref{sec:last} we analyse the data gathered and sketch the distribution of the $\h$-vectors of $1$- and $2$-point three-dimensional polytopes, comparing them with the inequalities of~\S\ref{subsec:dim3}

\section{Classification of one- and two-point triangles}\label{sec:weights1}
Let $S$ be a lattice simplex with $\V{S}=\{v_0,\ldots,v_d\}$ such that $\orig\in\intr{S}$. We say that $S$ has \emph{weights} $(\lambda_0,\ldots,\lambda_d)\in\Z^{d+1}_{>0}$ if
\[
\lambda_0v_0+\cdots+\lambda_dv_d=\orig.
\]
Since weights are unique up to scalar multiplication and reindexing, it is convenient to normalise them by requiring $\gcd{\lambda_0,\ldots,\lambda_d}=1$ and $\lambda_0\leq\cdots\leq\lambda_d$. Let $\cW_{d,k}$ denote the set of (normalised) weights for all $k$-point $d$-dimensional simplices. Trivially
\[
\cW_{1,1}=\{(1,1)\}\qquad\text{ and }\qquad\cW_{1,2}=\{(1,2)\}.
\]
The five $1$-point lattice triangles (depicted in Figure~\ref{fig:2d1ps}) give
\[
\cW_{2,1}=\{(1,1,1),(1,1,2),(1,2,3)\}.
\]
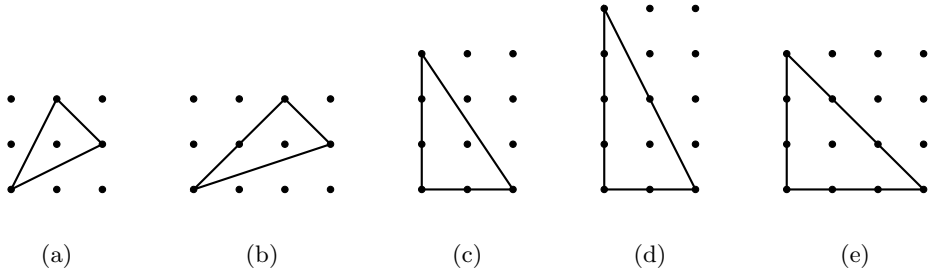
\begin{figure}[tb]
\centering
\begin{tikzpicture}[scale=0.6]
\draw[thick] (0,0)--(2,1)--(1,2)--cycle;
\draw[thick] (4,0)--(7,1)--(6,2)--cycle;
\draw[thick] (9,0)--(11,0)--(9,3)--cycle;
\draw[thick] (13,0)--(15,0)--(13,4)--cycle;
\draw[thick] (17,0)--(20,0)--(17,3)--cycle;
\foreach\x in {0,1,...,2}{
	\foreach\y in {0,1,...,2}{
		\draw[fill=black] (\x,\y) circle (0.2em) node[below]{};
	}
}
\foreach\x in {0,1,...,3}{
	\foreach\y in {0,1,...,2}{
		\draw[fill=black] (4+\x,\y) circle (0.2em) node[below]{};
	}
}
\foreach\x in {0,1,2}{
	\foreach\y in {0,1,...,3}{
		\draw[fill=black] (9+\x,\y) circle (0.2em) node[below]{};
	}
}
\foreach\x in {0,1,...,2}{
	\foreach\y in {0,1,...,4}{
		\draw[fill=black] (13+\x,\y) circle (0.2em) node[below]{};
	}
}
\foreach\x in {0,1,...,3}{
	\foreach\y in {0,1,...,3}{
		\draw[fill=black] (17+\x,\y) circle (0.2em) node[below]{};
	}
}
\draw[] (1,-1) circle (0em) node[below]{\hypertarget{subfig:P2}{\small(a)}};
\draw[] (5.5,-1) circle (0em) node[below]{\hypertarget{subfig:P112}{\small(b)}};
\draw[] (10,-1) circle (0em) node[below]{\small(c)};
\draw[] (14,-1) circle (0em) node[below]{\small(d)};
\draw[] (18.5,-1) circle (0em) node[below]{\small(e)};
\end{tikzpicture}
\caption{The five $1$-point lattice triangles. The corresponding weights are $(1,1,1)$, $(1,1,2)$, $(1,2,3)$, $(1,1,2)$, and $(1,1,1)$.}
\label{fig:2d1ps}
\end{figure}
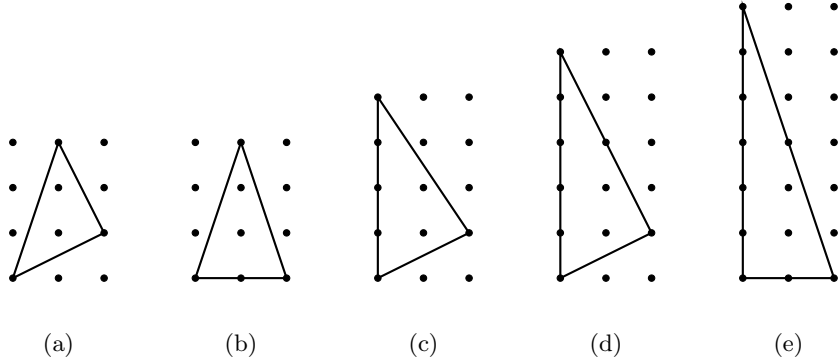
\begin{figure}[tb]
\centering
\begin{tikzpicture}[scale=0.6]
\draw[thick] (0,0)--(2,1)--(1,3)--cycle;
\draw[thick] (4,0)--(6,0)--(5,3)--cycle;
\draw[thick] (8,0)--(10,1)--(8,4)--cycle;
\draw[thick] (12,0)--(14,1)--(12,5)--cycle;
\draw[thick] (16,0)--(18,0)--(16,6)--cycle;
\foreach\x in {0,1,...,2}{
	\foreach\y in {0,1,...,3}{
		\draw[fill=black] (\x,\y) circle (0.2em) node[below]{};
	}
}
\foreach\x in {0,1,...,2}{
	\foreach\y in {0,1,...,3}{
		\draw[fill=black] (4+\x,\y) circle (0.2em) node[below]{};
	}
}
\foreach\x in {0,1,2}{
	\foreach\y in {0,1,...,4}{
		\draw[fill=black] (8+\x,\y) circle (0.2em) node[below]{};
	}
}
\foreach\x in {0,1,...,2}{
	\foreach\y in {0,1,...,5}{
		\draw[fill=black] (12+\x,\y) circle (0.2em) node[below]{};
	}
}
\foreach\x in {0,1,...,2}{
	\foreach\y in {0,1,...,6}{
		\draw[fill=black] (16+\x,\y) circle (0.2em) node[below]{};
	}
}
\draw[] (1,-1) circle (0em) node[below]{\small(a)};
\draw[] (5,-1) circle (0em) node[below]{\small(b)};
\draw[] (9,-1) circle (0em) node[below]{\small(c)};
\draw[] (13,-1) circle (0em) node[below]{\small(d)};
\draw[] (17,-1) circle (0em) node[below]{\small(e)};
\end{tikzpicture}
\caption{The five $2$-point lattice triangles, up to unimodular equivalence.}
\label{fig:2d2ps}
\end{figure}
\begin{lemma}\label{lem:2d2ps}
There are five $2$-point lattice triangles, up to unimodular equivalence. These are depicted in Figure~\ref{fig:2d2ps}. After translating each of the two interior points to the origin, there are eight possible weights:
\[
\cW_{2,2}=\{(1,1,1),(1,1,3),(1,2,2),(1,1,4),(1,2,3),(1,3,4),(1,4,5),(2,3,5)\}.
\]
\end{lemma}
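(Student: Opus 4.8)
The plan is to combine Scott's theorem (Theorem~\ref{thm:scott}) with a short finite computation, and then read off the weights by solving a linear system for each triangle that survives.

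Let $T$ be a $2$-point lattice triangle. Since $\h_2=\abs{\intr{T}\cap\Z^2}=2$, the $\h$-vector $(1,\h_1,2)$ of $T$ falls into the second of the three alternatives in Theorem~\ref{thm:scott}, so $2\leq\h_1\leq 9$, and by~\eqref{eq:ehr}
\[
\Vol{T}=1+\h_1+\h_2=\h_1+3\leq 12 .
\]
After a unimodular transformation we may assume $T=\sconv{\orig,(a,0),(c,d)}$ with $a\geq 1$ and $0\leq c<d$; here $\Vol{T}=ad$, and $d\geq 2$ since otherwise $\intr{T}\cap\Z^2=\varnothing$. Thus $ad\leq 12$ and only finitely many triples $(a,c,d)$ remain. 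For each of them the interior lattice points on the row $\{y=j\}$ (for $1\leq j\leq d-1$) are exactly the integers in the open interval $\bigl(cj/d,\ a-(a-c)j/d\bigr)$, of length $a(d-j)/d$, so $\abs{\intr{T}\cap\Z^2}$ is immediate; keep the triples for which it equals $2$. (The search may be pruned using the fact that the two interior points $p,q$ satisfy $q-p$ primitive, since otherwise $p+u$ would be a third interior lattice point, where $u$ is the primitive vector in direction $q-p$.) Sorting the surviving triangles into unimodular equivalence classes leaves, up to equivalence, the five triangles of Figure~\ref{fig:2d2ps}, distinguished already by their normalised volumes $5,6,8,10,12$.

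It remains to compute the weights. For each of the five triangles $T=\sconv{v_0,v_1,v_2}$ and each of its two interior lattice points $p$, translate $p$ to the origin and solve the linear system $\lambda_0(v_0-p)+\lambda_1(v_1-p)+\lambda_2(v_2-p)=\orig$; because $p\in\intr{T}$ the solution ray has all $\lambda_i>0$, and normalising produces a weight vector in $\cW_{2,2}$. This gives ten vectors, which collapse to the eight listed: for two of the five triangles the two interior points yield coinciding weights.

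The main obstacle is the middle step: each individual interior-point count is trivial, but there are many normal forms to run through, and one has to bound $d$ with some care — for the triangle with $\Vol{T}=5$ all three edges are primitive, so in any normal form $a=1$ and hence $d=5$, and similar behaviour forces one to allow $d$ as large as $10$ — and one must detect which normal forms coincide up to unimodular equivalence. This last point is the only genuinely fiddly part, and it can be bypassed entirely by invoking the classification of all $2$-point lattice polygons due to Wei--Ding~\cite{WD12} and extracting the triangles from their list, after which the weight computation proceeds exactly as above.
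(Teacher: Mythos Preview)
Your proof is correct but takes a different route from the paper's. The paper does not invoke Scott's theorem at all; instead it fixes the two interior points $\o1=(0,0)$, $\o2=(1,0)$ and one vertex $v_1=(0,1)$ (using that $\sconv{\o1,\o2,v_1}$ is unimodular), and then determines by a direct geometric exclusion argument which lattice points can serve as the remaining vertices $v_2,v_3$ without producing a third interior point. This leaves a short finite list of candidates, which are then sorted into equivalence classes. Your approach trades this ad-hoc geometry for an appeal to Theorem~\ref{thm:scott} followed by an enumeration in Hermite normal form; it is conceptually cleaner and easier to mechanise, at the cost of the equivalence-class bookkeeping you mention. The paper's argument is more self-contained and sidesteps that bookkeeping because the interior points are pinned from the outset. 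Your closing observation that one may simply extract the triangles from Wei--Ding~\cite{WD12} is also legitimate and is the shortest route of all.

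One small comment: the aside about bounding $d$ ``with some care'' is overcomplicated. From $ad\leq 12$ and $a\geq 1$ you get $d\leq 12$ immediately, and the search is already finite; there is no need to reason about primitive edges at this stage. (That the volume-$12$ triangle has no primitive edge, so $d\leq 10$ suffices in practice, is something one learns only \emph{after} the search.)
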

\begin{proof}
Let $S=\sconv{v_1,v_2,v_3}$ be a triangle with two interior lattice points $\o1$ and $\o2$. The triangle $\Delta:=\sconv{\o1,\o2,v_1}$ is unimodular, so without loss of generality we may take $\o1=(0,0)$, $\o2=(1,0)$, and $v_1=(0,1)$. We rule out the possible lattice points for the remaining two vertices $v_2$ and $v_3$.

If $v_2$ is contained in any of the dark grey regions of Figure~\ref{fig:2triangles}, than it is not possible to choose any position for $v_3$ such that $S$ contains $\o1$ and $\o2$ in its strict interior. We therefore exclude those regions. Any remaining lattice points $u\notin\Delta$ define a cone of possible points $u'$ such that $\conv{\Delta\cup\{u'\}}$ contains $u$ in its interior. We can exclude all the points which are in the (strict) interior of any of these cones; in Figure~\ref{fig:2triangles} the excluded points are contained in the interior of the light grey regions. Since one of the vertices of $S$ must be below the line passing through $\o1$ and $\o2$, so there are only finitely many candidates for this vertex. As a consequence we obtain finitely many ways to choose the third vertex, and therefore $S$. After eliminating duplicates given by unimodular transformations we obtain five possible triangles with two interior lattice points. By choosing either $\o1$ or $\o2$ as the origin, we obtain the list $\cW_{2,2}$ of weights.
\end{proof}
\begin{figure}[H]
\centering
\begin{tikzpicture}[scale=0.9]
\tikzset{cross/.style={cross out, draw=black, minimum size=2*(#1-\pgflinewidth), inner sep=0pt, outer sep=0pt},
cross/.default={0.5pt}}
\clip (-6.5,-1.5) rectangle (8.5,5.5); 
\fill[black!35] (1,1)--(3,1)--(5.5,-1.5)--(1,-1.5)--cycle;
\fill[black!15] (3,1)--(8,-1.5)--(5.5,-1.5)--cycle;
\fill[black!15] (1,1)--(-1.5,-1.5)--(1,-1.5)--cycle;
\fill[black!15] (2,3)--(2,4.5)--(3.5,4.5)--cycle;
\fill[black!15] (0,1)--(-1.25,-1.5)--(-6,-2)--cycle;
\fill[black!15] (3,2)--(8.5,2)--(8.5,-0.5)--(8.5,-0.5)--(9,-1)--cycle;
\fill[black!15] (4,1)--(10,-3)--(13,-2)--cycle;
\fill[black!15] (0,2)--(-6.5,2)--(-6.5,-0.5)--(-2.5,-0.5)--cycle;
\fill[black!15] (-6.5,-1.5)--(-2.5,-1.5)--(-2.5,1.5)--(-6.5,1.5)--cycle;
\fill[black!15] (0,3)--(-6.5,3)--(-6.5,4.5)--(-1.5,4.5)--cycle;
\fill[black!15] (2,3)--(8.5,3)--(8.5,4.5)--(2,4.5)--cycle;
\fill[black!35] (0,3)--(-1.5,4.5)--(2,4.5)--(2,3)--cycle;
\foreach\x in {-6,-5,...,-2,0,1,...,6,8}{\draw[fill=gray] (\x,-1) node[cross=4pt] {};}
\draw[fill=white] (-1,-1) circle (0.2em) node[below]{};
\draw[fill=white] (7,-1) circle (0.2em) node[below]{};
\foreach\x in {-6,-5,...,-3,-1,1,2,...,4,6,8}{\draw[fill=gray] (\x,0) node[cross=4pt] {};}
\draw[fill=white] (-2,0) circle (0.2em) node[below]{};
\draw[fill=white] (0,0) circle (0.2em) node[below]{};
\draw[fill=white] (5,0) circle (0.2em) node[below]{};
\draw[fill=white] (7,0) circle (0.2em) node[below]{};
\foreach\x in {-6,-5,...,-2,1,2,3,6,7,8}{\draw[fill=gray] (\x,1) node[cross=4pt] {};}
\draw[fill=white] (-1,1)		 circle 	(0.2em) node[below]{};
\draw[fill=white] (0,1)		 circle 	(0.2em) node[below]{};
\draw[fill=white] (4,1)		 circle 	(0.2em) node[below]{};
\draw[fill=white] (5,1)		 circle 	(0.2em) node[below]{};
\foreach\x in {-6,-5,...,-1,4,5,...,8}{\draw[fill=gray] (\x,2)		 node[cross=4pt] {};}
\foreach\x in {-6,-5,...,-1,3,4,...,8}{\draw[fill=white] (\x,3)		 circle 	(0.2em) node[below]{};}
\draw[fill=gray] (0,3)		 node[cross=4pt] {};
\draw[fill=gray] (2,3)		 node[cross=4pt] {};
\foreach\x in {-6,-5,...,8}{\draw[fill=gray] (\x,4)		 node[cross=4pt] {};}
\draw[fill=black] (1,2)		 circle 	(0.2em) node[below]{$\o1$};
\draw[fill=black] (2,2)		 circle 	(0.2em) node[below]{$\o2$};
\draw[fill=black] (1,3)		 circle 	(0.2em) node[above]{$v_1$};
\draw[fill=white] (3,2)		 circle 	(0.2em) node[below]{};
\draw[fill=white] (0,2)		 circle 	(0.2em) node[below]{};
\draw[fill=gray] (0,4)		 node[cross=4pt] {};
\draw[fill=gray] (1,4)		 node[cross=4pt] {};
\draw[fill=gray] (4,2)		 node[cross=4pt] {};
\draw[fill=gray] (5,2)		 node[cross=4pt] {};
\draw[fill=gray] (6,2)		 node[cross=4pt] {};
\draw[fill=gray] (7,2)		 node[cross=4pt] {};
\draw[fill=gray] (-1,0)		 node[cross=4pt] {};
\draw[fill=gray] (-1,2)		 node[cross=4pt] {};
\draw[fill=gray] (-1,4)		 node[cross=4pt] {};
\draw[fill=gray] (2,4)		 node[cross=4pt] {};
\draw[fill=gray] (3,4)		 node[cross=4pt] {};
\end{tikzpicture}
\caption{A proof of Lemma~\ref{lem:2d2ps}. The lattice points with a white circle are possible choices for $v_2$ and $v_3$.}
\label{fig:2triangles}
\end{figure}

\section{Classification of one- and two-point tetrahedra}\label{sec:weights2}
We now consider three-dimensional simplices. The weights of $\cW_{3,1}$ are already known, and are listed in~\cite[Table~3]{Kas10}. There are $104$ distinct weights, corresponding to $225$ $1$-point tetrahedra. To find $\cW_{3,2}$ we classify the $2$-point tetrahedra\footnote{In a personal communication, Christian Haase informs us that Noleen K\"ohler calculated an unpublished classification of the $2$-point tetrahedra. K\"ohler made use of conjectural bounds on the volume, and then iterated over all possible Hermite normal forms up to that volume.}. First we prove that each such tetrahedron $S$ decomposes in two (possibly lower-dimensional) $1$-point simplices; from this we construct all possible $S$.

\begin{lemma}\label{lem:subd}
Each $2$-point tetrahedron $S$, with $\intr{S}\cap\Z^3=\{\o1,\o2\}$, can be written as
\[
S=\conv{S_1\cup S_2},
\]
where $S_1$ and $S_2$ are two (possibly lower-dimensional) $1$-point lattice simplices satisfying:
\begin{enumerate}
\item
$\o2\in\V{S_1}$ and $\o1\in\V{S_2}$;
\item
$\intr{S_1}\cap\Z^3=\{\o1\}$ and $\intr{S_2}\cap\Z^3=\{\o2\}$;
\item
$\abs{\V{S_1}\cap\V{S_2}}=\dim{S_1} + \dim{S_2} - 4$.
\end{enumerate}
\end{lemma}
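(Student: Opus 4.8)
The plan is to realise the decomposition geometrically, by slicing $S$ along the line joining its two interior points. First I would take the line $\ell$ through $\o1$ and $\o2$; since $\o1,\o2\in\intr S$, the intersection $\ell\cap S$ is a segment $[p_1,p_2]$ with $p_1,p_2\in\bdry S$, and we may label the endpoints so that $p_1,\o1,\o2,p_2$ occur in this order along $\ell$. Let $F_i$ be the unique face of $S$ containing $p_i$ in its relative interior, and set
\[
S_1:=\conv{F_1\cup\{\o2\}},\qquad S_2:=\conv{F_2\cup\{\o1\}}.
\]
Because the affine hull of a proper face of $S$ is disjoint from $\intr S$, we have $\o2\notin\operatorname{aff}(F_1)$ and $\o1\notin\operatorname{aff}(F_2)$; hence $S_1$ is the pyramid over $F_1$ with apex $\o2$ and $S_2$ the pyramid over $F_2$ with apex $\o1$, so both are lattice simplices with $\dim{S_i}=\dim{F_i}+1$, $\V{S_1}=\V{F_1}\cup\{\o2\}$, and $\V{S_2}=\V{F_2}\cup\{\o1\}$. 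In particular $\o2\in\V{S_1}$ and $\o1\in\V{S_2}$, which is the first condition.

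For the second condition I would use that $F_1$ is a facet of the simplex $S_1$: every point of $\intr{S_1}$ has the form $\lambda\o2+(1-\lambda)f$ with $\lambda\in(0,1)$ and $f\in\intr{F_1}$, and since $\intr{F_1}\subset\bdry S$ while $\o2\in\intr S$, such a point lies in $\intr S$. Thus $\intr{S_1}\subset\intr S$, so $\intr{S_1}\cap\Z^3\subseteq\{\o1,\o2\}$. As $\o2$ is a vertex of $S_1$ while $\o1$ lies on the open segment $(\o2,p_1)$ with $p_1\in\intr{F_1}$, we get $\o1\in\intr{S_1}$ and hence $\intr{S_1}\cap\Z^3=\{\o1\}$; symmetrically $\intr{S_2}\cap\Z^3=\{\o2\}$.

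The key step --- and the one I expect to be the real obstacle --- is the identity $S=\conv{S_1\cup S_2}$, which reduces to $\V S=\V{F_1}\cup\V{F_2}$. One inclusion is immediate from $S_1,S_2\subseteq S$. For the other, suppose a vertex $v\in\V S$ lay in neither $F_1$ nor $F_2$; then $\V{F_1},\V{F_2}\subseteq\V S\setminus\{v\}$, so both $F_1$ and $F_2$ are contained in the facet of $S$ opposite $v$, and hence so is the entire segment $[p_1,p_2]$ --- contradicting $\o1\in[p_1,p_2]\cap\intr S$. Therefore $\V S=\V{F_1}\cup\V{F_2}$, and $\conv{S_1\cup S_2}=\conv{\V{F_1}\cup\V{F_2}\cup\{\o1,\o2\}}=S$.

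Finally, for the third condition: since $\o1\neq\o2$ and neither is a vertex of $S$ (nor, therefore, of $F_1$ or $F_2$), we have $\V{S_1}\cap\V{S_2}=\V{F_1}\cap\V{F_2}$. Combining $\dim{S_i}=\abs{\V{F_i}}$ with $\abs{\V{F_1}\cup\V{F_2}}=\abs{\V S}=4$, inclusion--exclusion gives
\[
\abs{\V{S_1}\cap\V{S_2}}=\abs{\V{F_1}}+\abs{\V{F_2}}-4=\dim{S_1}+\dim{S_2}-4,
\]
as required. Apart from the vertex-covering claim, everything is routine bookkeeping with the face structure of a simplex; the only points needing care are that $\intr{S_1}$ really does avoid $\bdry S$ (so that no spurious interior lattice points appear) and the exact ordering of $p_1,\o1,\o2,p_2$ along $\ell$.
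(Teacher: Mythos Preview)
Your construction is identical to the paper's: take the line through $\o1,\o2$, let $F_1,F_2$ be the carrier faces of its two boundary intersection points, and set $S_i=\conv{F_i\cup\{\o{3-i}\}}$. The paper's proof simply asserts that ``by construction $S_1$ and $S_2$ satisfy the hypothesis,'' whereas you have carefully filled in the verifications (notably $\intr{S_1}\subset\intr S$, the vertex-covering $\V S=\V{F_1}\cup\V{F_2}$, and the inclusion--exclusion count), all of which are correct.
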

\begin{proof}
Consider the line passing through $\o1$ and $\o2$. This line intersects $\bdry{P}$ in two distinct points $q_1$ and $q_2$. Label these points so that $\o1$ lies on the line segment joining $q_1$ and $\o2$. Let $F_1$ and $F_2$ be the faces of $P$ of smallest possible dimension containing $q_1$ and $q_2$, respectively. Define $S_1=\conv{F_1\cup\{\o2\}}$ and $S_2=\conv{F_2\cup\{\o1\}}$. By construction $S_1$ and $S_2$ satisfy the hypothesis.
\end{proof}
\noindent
Fix a decomposition of $S$ with $\dim{S_1}\leq\dim {S_2}$. Then one of two possibilities holds:
\begin{center}
\begin{enumerate*}[label=({\Alph*}),ref={\Alph*},itemjoin={\qquad or \qquad},afterlabel={\ \ }]
\item\label{case:S2_dim3} $\dim{S_2}=3$; 
\item\label{case:S1_S2_dim2} $\dim{S_1}=\dim{S_2}=2$.
\end{enumerate*}
\end{center}
Note that $\dim{S_1}=1$ and $\dim{S_2}\leq 2$ is impossible; for example, $\conv{S_1\cup S_2}$ would have dimension at most two. We now describe how to systematically construct all $2$-point tetrahedra.

\subsection*{Possibility~\ref{case:S2_dim3}}\ \vspace{0.5em}\\
Begin by fixing the vertices $\{v_0,v_1,v_2,v_3\}$ of a $1$-point tetrahedron $S_2$ with unique interior point $\o2$, along with the dimension $1\leq k\leq 3$ and weights $(\lambda_0,\ldots,\lambda_k)\in\cW_{k,1}$ for a $1$-point simplex $S_1$. We will assume that $S_1$ and $S_2$ form a decomposition of a $2$-point tetrahedron $S=\conv{S_1\cup S_2}$ as in Lemma~\ref{lem:subd}, and either derive all possible vertices $\{u_0,\ldots,u_k\}$ for $S_1$ (and hence $S$), or deduce that the choice of weights $(\lambda_0,\ldots,\lambda_k)$ is incompatible with $S_2$.

Since $\dim{S_2}=3$, so $k-1$ vertices of $S_1$ are in common with $\V{S_2}$, with the remaining two vertices of $S_2$ given by $\o2\in\intr{S_2}$ and by $u_k\notin S_2$ (which we will determine). Furthermore, the unique interior lattice point $\o1\in\intr{S_1}$ is a vertex of $S_2$. This is illustrated in Figure~\ref{fig:S_is_determined}. Pick a subset $\{i_1,\ldots,i_{k-1}\}\subset\{0,1,2,3\}$, $i_1<\cdots<i_{k-1}$, an element $i_0\in\{0,1,2,3\}\setminus\{i_1,\ldots,i_{k-1}\}$, and set
\[
u_0=\o2,\quad u_1=v_{i_1},\quad\ldots,\quad u_{k-1}=v_{i_{k-1}},\quad\text{ and }\quad\o1=v_{i_0}.
\]
Let $\tau\in\Sym(k+1)$ be a permutation of the integers $\{0,\ldots,k\}$, and set
\[
u_k=\o1-\frac{1}{\lambda_{\tau(k)}}\sum_{j=0}^{k-1}\lambda_{\tau(j)}(u_j-\o1).
\]
If $u_k\in\Z^3$ we then check whether $S=\sconv{v_0,v_1,v_2,v_3,u_k}$ is a $2$-point tetrahedron. By iterating over all subsets $\{i_1,\ldots,i_{k-1}\}$, choices for $i_0$, and permutations $\tau\in\Sym(k+1)$, we will construct all possible $2$-point tetrahedra $S$ (not necessarily distinct with respect to unimodular equivalence) that can be obtained from $S_2$ and a $1$-point $k$-simplex $S_1$ with weights $(\lambda_0,\ldots,\lambda_k)$.
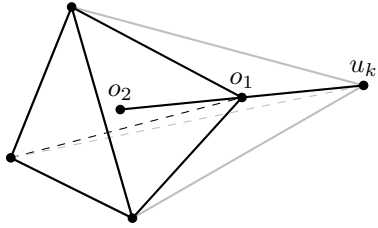
\begin{figure}[tb]
\centering
\begin{tikzpicture}[scale=0.8]
\draw[dashed] (0,0)--(3.8,1);
\draw[dashed,black!25] (0,0)--(5.8,1.2);
\draw[thick,black!25] (2,-1)--(0,0)--(1,2.5)--(2,-1)--(5.8,1.2)--(1,2.5);
\draw[thick] (2,-1)--(0,0)--(1,2.5)--(2,-1)--(3.8,1)--(1,2.5);
\draw[thick] (1.8,0.8)--(5.8,1.2);
\draw[fill=black] (1.8,0.8) circle (0.2em) node[above]{$\o2$};
\draw[fill=black] (3.8,1) circle (0.2em) node[above]{$\o1$};
\draw[fill=black] (5.8,1.2) circle (0.2em) node[above]{$u_k$};
\draw[fill=black] (2,-1) circle (0.2em) node[above]{};
\draw[fill=black] (0,0) circle (0.2em) node[above]{};
\draw[fill=black] (1,2.5) circle (0.2em) node[above]{};
\end{tikzpicture}
\caption{The position of the $k$-dimensional simplex $S_1$ (in this case, $k=1$) with respect to the tetrahedron $S_2$ is determined after choosing a subset of the vertices of $S_2$. The $2$-point tetrahedron $S=\conv{S_1\cup S_2}$ is determined (up to unimodular equivalence) by this choice.}
\label{fig:S_is_determined}
\end{figure}
\begin{remark}\label{rem:special_simplices}
The relative interiors of the faces of $S_1$ having $\o2$ as a vertex (similarly, those faces of $S_2$ having $\o1$ as a vertex) are in the (strict) interior of the $2$-point simplex $S$. Hence we can insist that $S_1$ has at least one vertex $v$ such that all the faces of $S_1$ containing $v$ have no lattice points in their relative interiors. Similarly for $S_2$. Out of the five $1$-point triangles only two satisfy this property, whilst out of the $225$ $1$-point tetrahedra only $63$ satisfy this property. This dramatically reduces the number of simplices needed to be considered.
\end{remark}

\subsection*{Possibility~\ref{case:S1_S2_dim2}}\ \vspace{0.5em}\\
We now handle the possibility that $\dim{S_1}=\dim{S_2}=2$. In this case, $S_1$ and $S_2$ have no vertices in common. Let $\Delta_{(1,1,1)}$ and $\Delta_{(1,1,2)}$ denote the triangles~(\hyperlink{subfig:P2}{a}) and~(\hyperlink{subfig:P112}{b}) of Figure~\ref{fig:2d1ps}, respectively. 

\begin{lemma}\label{lemma:edgeedge}
Let $S$ be a $2$-point tetrahedron decomposing in two $1$-point triangles. Then, up to unimodular equivalence, we have that $S_1$ is either
\[
\sconv{(1,0,0),(0,1,0),(-1,-1,0)}
\qquad\text{ or }\qquad
\sconv{(2,1,0),(0,1,0),(-1,-1,0)}.
\]
Moreover, if $v=(a,b,c)$ is one of the two vertices of $S_2$ distinct from $\o1$, we have that $0\leq a\leq b <c\leq 7$.
\end{lemma}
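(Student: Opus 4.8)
The plan is to exploit the ``edge plus apex'' shape of the decomposition $S=\conv{S_1\cup S_2}$. By the remarks preceding the statement, in Possibility~\ref{case:S1_S2_dim2} the faces $F_1,F_2$ produced by Lemma~\ref{lem:subd} are a pair of opposite edges of the tetrahedron $S$; writing $F_1=[p_1,p_2]$ and $F_2=[p_3,p_4]$ we get $S_1=\sconv{p_1,p_2,\o2}$, $S_2=\sconv{p_3,p_4,\o1}$ and $S=\sconv{p_1,p_2,p_3,p_4}$. The key rigidity observation is that the two edges of $S_1$ meeting $\o2$ are primitive: for any vertex $p$ of $S_1$ the half-open segment $[\o2,p)$ lies in $\intr{S}$ (a point strictly between an interior point of a convex body and a point of the body is interior), so the relative interior of each such edge meets $\Z^3$ only within $\{\o1,\o2\}$, while $\o1\in\intr{S_1}$ is off $\bdry{S_1}$ and $\o2$ is an endpoint. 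The same applies to the two edges of $S_2$ meeting $\o1$. Among the five $1$-point triangles of Figure~\ref{fig:2d1ps}, only $\Delta_{(1,1,1)}$ and $\Delta_{(1,1,2)}$ have a vertex with this property --- the two triangles isolated in Remark~\ref{rem:special_simplices} (in $\Delta_{(1,1,2)}$ such a vertex is unique and carries the weight $2$). Hence $S_1$, with distinguished vertex $\o2$, is unimodularly equivalent to $\Delta_{(1,1,1)}$ or $\Delta_{(1,1,2)}$, and a single unimodular change of coordinates places $S_1$ in one of the two stated normal forms, lying in the plane $\{z=0\}$, with $\o1=\orig$, $\o2=(-1,-1,0)$, and $\{p_1,p_2\}$ equal to $\{(1,0,0),(0,1,0)\}$ or $\{(2,1,0),(0,1,0)\}$ respectively.

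The next step is to locate $v:=p_3$ and $w:=p_4$. Since $\o2\in\intr{S_2}$ and the vertex $\orig$ of $S_2$ both lie in $\{z=0\}$, the vertices $v,w$ lie strictly on opposite sides of $\{z=0\}$: otherwise $S_2\cap\{z=0\}$ is $\{\orig\}$ or an edge of $S_2$, neither of which can contain the interior point $\o2$ in its relative interior. Relabelling, take the last coordinate of $v=(a,b,c)$ to be positive, so $c\geq 1$. The unimodular maps fixing $\orig$, fixing $S_1$ setwise and preserving $\{z=0\}$ are the shears $(x,y,z)\mapsto(x+\lambda z,y+\mu z,z)$ with $\lambda,\mu\in\Z$, the reflection $z\mapsto -z$, and the finite symmetry group of $S_1$; reducing $a$ and $b$ modulo $c$ with the shears and then using the remaining symmetries (and, if needed, interchanging $v$ with $w$ via $z\mapsto -z$) we may assume $0\leq a\leq b<c$.

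It then remains to bound $c$, which is the heart of the argument. Since $\o2$ is a vertex of $S_1$ and $S_2$ is equivalent to $\Delta_{(1,1,1)}$ or $\Delta_{(1,1,2)}$, the vertex $w$ is an explicit affine function of $v$ --- e.g.\ $w=3\o2-v=(-3-a,-3-b,-c)$ when $S_2\cong\Delta_{(1,1,1)}$, with a short analogous list in the remaining cases according to which vertex of $S_2$ carries the weight $2$. Substituting, the four vertices of $S$ become explicit affine functions of $a,b,c$, and the hypothesis that $S$ is a $2$-point tetrahedron forces the relative interiors of the horizontal slices $S\cap\{z=j\}$, $j=1,\dots,c-1$, to be free of lattice points. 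After the reduction $0\leq a,b<c$ only finitely many configurations remain for each $c$, and one verifies that whenever $c\geq 8$ some such slice must contain a lattice point in its relative interior, contradicting the hypothesis; hence $c\leq 7$. Combined with $0\leq a\leq b<c$, this gives $0\leq a\leq b<c\leq 7$. The one delicate point --- and the source of the precise constant $7$ --- is this lattice-point count on the slices, which must be run through for each isomorphism type of $S_1$ and $S_2$ and each admissible weight distribution on $S_2$; the rest is routine bookkeeping.
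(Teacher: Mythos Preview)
Your setup and the identification of $S_1$ are correct and match the paper: the primitivity of the edges of $S_i$ through $o_{3-i}$ is exactly the content of Remark~\ref{rem:special_simplices}, and it forces each $S_i$ to be $\Delta_{(1,1,1)}$ or $\Delta_{(1,1,2)}$ with the distinguished vertex carrying weight~$2$ in the latter case. The normalisation $\o1=\orig$, $\o2=(-1,-1,0)$ and the use of shears to reduce $(a,b)$ modulo $c$ are also the same as in the paper.

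There are, however, two gaps.

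\emph{Minor.} Your reduction to $a\le b$ appeals to ``the finite symmetry group of $S_1$'', but when $S_1=\Delta_{(1,1,2)}=\sconv{(2,1,0),(0,1,0),(-1,-1,0)}$ no symmetry of $S_1$ interchanges the first two coordinates, and neither does the involution $z\mapsto -z$ followed by a shear. In this case $a\le b$ is not a normalisation but a consequence of the $2$-point hypothesis: the paper proves it by showing that if $b<a$ then both $(0,1,1)$ and $(-1,0,-1)$ are forced into $\intr{S}$.

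\emph{Major.} The bound $c\le 7$ is the whole point of the lemma, and your argument does not establish it. You write that ``for each $c$ only finitely many configurations remain'' and that ``one verifies that whenever $c\ge 8$ some slice must contain a lattice point''. But this is an infinite family of verifications, one for every $c\ge 8$, and you give no mechanism to carry them out uniformly. The slice $S\cap\{z=1\}$ is a triangle whose vertices depend on $a,b,c$ with $0\le a,b<c$; its shape does not stabilise as $c$ grows, so there is no evident limiting argument either. As stated, the proof is incomplete precisely at the step you yourself flag as ``delicate''.

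The paper's method here is genuinely different and is what makes the argument finite. Rather than slicing, it selects a handful of specific lattice points---for instance $(1,1,1)$, $(0,1,1)$, $(-1,-1,-1)$ in case $S_1=S_2=\Delta_{(1,1,1)}$---and writes each as an explicit affine combination of $\orig$ and the four vertices of $S$. The requirement that such a point \emph{not} lie in $\intr{S}$ forces one of the coefficients to be non-positive, yielding a linear inequality in $a,b,c$. Combining three or four such inequalities with $0\le a\le b<c$ bounds $c$ outright (by $5$, $7$, and $4$ in the three cases $\Delta_{(1,1,1)}/\Delta_{(1,1,1)}$, $\Delta_{(1,1,1)}/\Delta_{(1,1,2)}$, $\Delta_{(1,1,2)}/\Delta_{(1,1,2)}$ respectively). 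To complete your proof you would need to supply an argument of this kind.
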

\begin{proof}
We use the notation introduced in the proof of Lemma~\ref{lem:subd}, and take $\o1=\orig$ to be the origin of the lattice. Let $v_1$ and $v_2$ be the endpoints of the edge $F_1$, and let $v_3$ and $v_4$ be the endpoints of the edge $F_2$, so that $S_1=\sconv{v_1,v_2,\o2}$ and $S_2=\sconv{v_3,v_4,\o1}$. As observed in Remark~\ref{rem:special_simplices}, $S_1$ and $S_2$ can have non-vertex boundary lattice points only on the edges $F_1$ and $F_2$, respectively. Since the only $1$-point lattice triangles not having lattice points in the relative interior of at least two edges are $\Delta_{(1,1,1)}$ and $\Delta_{(1,1,2)}$, this implies the first part of the statement. For the second part, we consider three cases:
\begin{center}
\begin{enumerate*}[label={(\alph*)},ref=\alph*,itemjoin=\qquad]
\item\label{edgeedge:a} $S_1=S_2=\Delta_{(1,1,1)}$;
\item\label{edgeedge:b} $S_1=\Delta_{(1,1,1)}$, $S_2=\Delta_{(1,1,2)}$;
\item\label{edgeedge:c} $S_1=S_2=\Delta_{(1,1,2)}$.
\end{enumerate*}
\end{center}

\noindent\textbf{Case~(\ref{edgeedge:a})}\hspace{0.5em} $S_1=S_2=\Delta_{(1,1,1)}$.\\
Suppose that $v_1=(1,0,0)$, $v_2=(0,1,0)$, and $\o2=(-1,-1,0)$. Let $a,b,c\in\Z$ be such that $v_3=(a,b,c)$. By applying a suitable unimodular transformation we can assume that
\begin{equation}\label{eq:cond}
0\leq a\leq b<c,
\end{equation}
where the second inequality can be assumed thanks to the symmetry of the first and second coordinates of the vertices of $S_1$. The choice of $v_3$ fixes $v_4$, and a trivial calculations shows that
\[
v_4=(-3-a,-3-b,-c).
\]
We now find conditions on $a,b,c$ by excluding situations which give rise to interior lattice points for $S$ distinct from $\o1$ and $\o2$. Notice that the lattice point $(1,1,1)$ can be written as
\[
\tfrac{1}{c}(a,b,c)+\tfrac{c-a}{c}(1,0,0)+\tfrac{c-b}{c}(0,1,0)+\tfrac{a+b-c-1}{c}(0,0,0).
\]
Since the numerators $1$, $c-a$, and $c-b$ are positive, we require $a+b-c-1\leq 0$, otherwise $(1,1,1)$ would be an interior point of $S$. Hence
\begin{equation}\label{eq:1}
c\geq a+b-1.
\end{equation}
Similarly the lattice point $(0,1,1)$, which can be written as
\[
\tfrac{1}{c}(a,b,c)+\tfrac{a}{c}(-1,-1,0)+\tfrac{c+a-b}{c}(0,1,0)+\tfrac{b-2a-1}{c}(0,0,0),
\]
is in $\intr{S}$ if and only if $b-2a-1\geq 0$. Therefore
\begin{equation}\label{eq:2}
b < 2a+1.
\end{equation}
Finally, since we can write $(-1,-1,-1)$ as
\[
\tfrac{1}{c}(-3-a,-3-b,-c)+\tfrac{c-a-3}{c}(-1,-1,0)+\tfrac{b-a}{c}(0,1,0)+\tfrac{2a-b+2}{c}(0,0,0),
\]
so this is a point in $\intr{P}$ if and only if $c-a-3\geq 0$. Since $2a-b+2>0$ by~\eqref{eq:2}, so we require that
\begin{equation}\label{eq:3}
c<a+3.
\end{equation}
From~\eqref{eq:cond},~\eqref{eq:1}, and~\eqref{eq:3}, we obtain the bounds $0\leq a\leq b <c\leq 5$.

\vspace{0.5em}
\noindent\textbf{Case~(\ref{edgeedge:b})}\hspace{0.5em} $S_1=\Delta_{(1,1,1)}$, $S_2=\Delta_{(1,1,2)}$.\\
As in the previous case we suppose that $v_1=(1,0,0)$, $v_2=(0,1,0)$, $\o2=(-1,-1,0)$, and $v_3=(a,b,c)$ for some integers $a$, $b$, and $c$ satisfying~\eqref{eq:cond}. In this case
\[
v_4=(-4-a,-4-b,-c).
\]
Note that~\eqref{eq:1} and~\eqref{eq:2} still hold. Moreover, since $(-1,-1,-1)$ can be written as
\[
\tfrac{1}{c}(-4-a,-4-b,-c)+\tfrac{c-a-4}{c}(-1,-1,0)+\tfrac{b-a}{c}(0,1,0)+\tfrac{2a-b+3}{c}(0,0,0),
\]
this is a point in $\intr{S}$ if and only if $c-a-4\geq 0$. By~\eqref{eq:2} we have that $2a-b+3>0$, therefore
\begin{equation}\label{eq:4}
c<a+4.
\end{equation}
From~\eqref{eq:cond},~\eqref{eq:1}, and~\eqref{eq:4}, we obtain the bounds $0\leq a\leq b <c\leq 7$.

\vspace{0.5em}
\noindent\textbf{Case~(\ref{edgeedge:c})}\hspace{0.5em} $S_1=S_2=\Delta_{(1,1,2)}$.\\
In this case, we suppose that $v_1=(2,1,0)$, $v_2=(0,1,0)$, $\o2=(-1,-1,0)$, and $v_3=(a,b,c)$ for some integers $a$, $b$ and $c$ satisfying
\begin{equation}\label{eq:cond2}
0\leq a,b<c.
\end{equation}
In this case $v_4=(-4-a,-4-b,-c)$. Suppose for a contradiction that $b<a$. The lattice point
\[
(0,1,1)=\tfrac{1}{c}(a,b,c)+\tfrac{a}{c}(-1,-1,0)+\tfrac{a-b}{c}(0,1,0)+\tfrac{c-2a+b-1}{c}(0,0,0)
\]
is in $\intr{S}$ if and only if $c-2a+b-1\geq 0$. Therefore
\begin{equation}
\label{eq:6}
c<2a-b+1.
\end{equation}
The point $(-1,0,-1)$ can be written as
\[
\tfrac{1}{c}(-4-a,-4-b,-c)+\tfrac{c-a-4}{c}(-1,-1,0)+\tfrac{c+b-a}{c}(0,1,0)+\tfrac{2a-b-c+3}{c}(0,0,0),
\]
and this is in $\intr{S}$ if and only if $2a-b-c+3\geq 0$. Hence $c>2a-b+3$, contradicting~\eqref{eq:6}. Thus $a\leq b$, and we see that~\eqref{eq:2} and~\eqref{eq:4} hold for this case. Furthermore, $(-1,-1,1)$ is equal to 
\[
\tfrac{1}{c}(-4-a,-4-b,-c)+\tfrac{a-c+4}{2c}(2,1,0)+\tfrac{2b-a-c+4}{2c}(0,1,0)+\tfrac{2c-b-5}{c}(0,0,0),
\]
and this is in $\intr{S}$ if and only if $a-c+4\geq 0$, $2b-a-c+4\geq 0$, and $2c-b-5>0$. Notice that $a-c+4>0$ by~\eqref{eq:4}, hence either $2b-a-c+4\geq0$ or $2c-b-5>0$ must fail to hold. If $2c-b-5\leq0$ we have $2c< c+5$, hence $c\leq4$. Conversely, suppose that $2b-a-c+4<0$. In this case the lattice point $(1,1,1)$ can be expressed as
\[
\tfrac{1}{c}(a,b,c)+\tfrac{c-a}{2c}(2,1,0)+\tfrac{a-2b+c}{2c}(0,1,0)+\tfrac{b-1}{c}(0,0,0).
\]
By hypothesis, $a-2b+c>0$, so $b-1$ has to be non-positive in order for $(1,1,1)\notin\intr{S}$. By~\eqref{eq:4} we obtain $c\leq 4$. Hence we have the bounds $0\leq a\leq b < c\leq 4$.
\end{proof}

Possibility~\ref{case:S2_dim3} gives rise to $460$ $2$-point tetrahedra (distinct up to unimodular equivalence). Possibility~\ref{case:S1_S2_dim2} gives rise to an additional $11$ $2$-point tetrahedra. We obtain:

\begin{thm}\label{thm:3s2p}
There are $471$ $2$-point tetrahedra, up to unimodular equivalence. After translating each of the two interior points to the origin, there are $548$ possible weights. These weights are listed in Table~\ref{tab:W32} on page~\pageref{tab:W32}.
\end{thm}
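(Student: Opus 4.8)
The proof is by exhaustive (computer-assisted) search over a finite set of candidates, and the plan is to run the two constructions set up above, reconcile their outputs, and then extract the weights. First I would record that every $2$-point tetrahedron $S$ is produced. By Lemma~\ref{lem:subd}, $S$ admits a decomposition $S=\conv{S_1\cup S_2}$ into $1$-point simplices, and this decomposition is in fact determined by $S$: the line through $\o1$ and $\o2$, the boundary points $q_1,q_2$, their labelling, and the minimal faces $F_1,F_2$ are all forced. Relabelling so that $\dim{S_1}\leq\dim{S_2}$, the dichotomy established above puts $S$ into exactly one of Possibility~\ref{case:S2_dim3} or Possibility~\ref{case:S1_S2_dim2}; these are mutually exclusive, since $\dim{S_2}$ (for the canonical decomposition) is a unimodular invariant of $S$.

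For Possibility~\ref{case:S2_dim3} the search is over: the $1$-point tetrahedra $S_2$ — which by Remark~\ref{rem:special_simplices}, applied to the vertex $\o1\in\V{S_2}$ (whose faces lie in $\intr{S}$ and so carry no lattice points in their relative interiors), may be taken from the $63$ ``special'' representatives rather than all $225$; the dimension $k\in\{1,2,3\}$ and the weights in $\cW_{k,1}$ of the correspondingly restricted list for $S_1$; the subset $\{i_1,\ldots,i_{k-1}\}\subset\{0,1,2,3\}$ of vertices shared with $S_1$; the choice $i_0$ with $\o1=v_{i_0}$; and the permutation $\tau\in\Sym(k+1)$ assigning the weights to the vertices (note that once $S_2$ is fixed, $\o2$ is its unique interior lattice point, so $u_0=\o2$ is not a free choice). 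For each such datum one forms $u_k=\o1-\tfrac{1}{\lambda_{\tau(k)}}\sum_{j=0}^{k-1}\lambda_{\tau(j)}(u_j-\o1)$, discards it unless $u_k\in\Z^3$, and keeps $S=\sconv{v_0,v_1,v_2,v_3,u_k}$ only when it is genuinely a $2$-point tetrahedron. Because the weight relation pins down $u_k$ once the combinatorial data is fixed, and all such data is enumerated, every tetrahedron of this type is obtained; deduplicating under $\GL_3(\Z)\ltimes\Z^3$ leaves $460$.

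For Possibility~\ref{case:S1_S2_dim2} the search is finite by Lemma~\ref{lemma:edgeedge}: $S_1$ is one of two explicit triangles, and with $\o1=\orig$ the free vertex $v=(a,b,c)$ of $S_2$ satisfies $0\leq a\leq b<c\leq 7$ (the remaining vertex of $S_2$ being determined by $v$), so there are only finitely many candidates to test against the $2$-point condition; deduplication leaves $11$. Since the canonical value of $\dim{S_2}$ distinguishes the two families, nothing is counted twice, and the total is $460+11=471$. Finally, for each of the $471$ tetrahedra and each of its two interior points $\o1,\o2$, translate that point to the origin and solve $\lambda_0v_0+\cdots+\lambda_3v_3=\orig$ for the unique primitive positive weight vector, sorted into non-decreasing order; the two interior points of a given tetrahedron generally give different vectors, and there are coincidences across tetrahedra, so collecting the distinct vectors leaves the $548$ weights recorded in Table~\ref{tab:W32}.

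The main obstacle is not any single computation but certifying completeness of the enumeration: one must be confident that passing from $S$ to its Lemma~\ref{lem:subd} decomposition, placing $S_2$ (respectively $S_1$ and $S_2$ in Possibility~\ref{case:S1_S2_dim2}) into standard position, and iterating over all gluing data together miss nothing — and, on the implementation side, that the ``is it a $2$-point tetrahedron'' filter and the normal-form deduplication are correct and that the linear system for the weights is solved exactly over $\Q$. The bound in Lemma~\ref{lemma:edgeedge} and the finiteness of the combinatorial data in Possibility~\ref{case:S2_dim3} are what make the searches terminate, while Remark~\ref{rem:special_simplices} is what makes them feasible in practice.
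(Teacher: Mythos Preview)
Your proposal is correct and follows the paper's approach exactly: decompose via Lemma~\ref{lem:subd}, split into Possibilities~\ref{case:S2_dim3} and~\ref{case:S1_S2_dim2}, run the finite search described for each (using Remark~\ref{rem:special_simplices} to restrict the $1$-point simplices and Lemma~\ref{lemma:edgeedge} to bound case~\ref{case:S1_S2_dim2}), and deduplicate to obtain $460+11=471$ tetrahedra and then $548$ weights. Your explicit justification that the two families are disjoint---because the canonical $\dim{S_2}$ is an invariant of $S$---is a detail the paper leaves implicit in the word ``additional'', and it is indeed correct: in Possibility~\ref{case:S2_dim3} the ray from the vertex $\o1=v_{i_0}$ of $S_2$ through the interior point $\o2$ exits through the relative interior of the opposite facet $\sconv{v_j:j\neq i_0}$, which is a facet of $S$, so the canonical $F_2$ is two-dimensional; in Possibility~\ref{case:S1_S2_dim2} one checks directly from the explicit coordinates that $q_1$ and $q_2$ land in the relative interiors of the edges $[v_1,v_2]$ and $[v_3,v_4]$.
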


\section{Minimal polytopes in dimension three}\label{sec:minimal}
In this section we consider those $1$- or $2$-point three-dimensional lattice polytopes which are minimal under inclusion. These polytopes form the ``seeds'' from which the classification of $2$-point polytopes will be ``grown'' in~\S\ref{sec:main}.

\begin{definition}\label{def:minimal}
We say that a $d$-dimensional lattice polytope $P$ is \emph{minimal} if $P$ is not hollow and, for each vertex $v$ of $P$, the polytope $\conv{P\cap\Z^d\setminus\{v\}}$ is hollow.
\end{definition}

\begin{prop}\label{prop:small}
Let $P$ be a minimal three-dimensional polytope. Then $P$ is either a minimal $1$-point polytope or a non-hollow tetrahedron whose interior points lie on a line.
\end{prop}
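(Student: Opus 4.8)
The plan is to show that a minimal three-dimensional polytope $P$ which is not a $1$-point polytope — so a $2$-point polytope by the definition of minimality together with the finiteness statements in \S\ref{subsec:dim3}, since minimal polytopes are ``small'' — must in fact be a tetrahedron, and then that its two interior lattice points are collinear with no third interior point off the line (which is automatic for a $2$-point polytope). Write $\intr{P}\cap\Z^3=\{\o1,\o2\}$. First I would recall that minimality forces every vertex to be ``essential'' for non-hollowness: removing any vertex $v$ leaves a hollow polytope, so the interior points $\o1,\o2$ are lost. Concretely, for each vertex $v$ there is an interior point of $P$ that lies in the relative interior of the facet(s) of $\conv{P\cap\Z^3\setminus\{v\}}$ ``newly exposed'' by deleting $v$; equivalently, $v$ must ``see'' $\o1$ or $\o2$ in a strong sense. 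The key structural consequence I want to extract is a bound on the number of vertices.

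The main step is the vertex count. I would argue that $P$ has at most four vertices. Suppose $P$ has a vertex $v$ and consider $Q_v:=\conv{(P\cap\Z^3)\setminus\{v\}}$, which is hollow. The points of $P$ lost from the interior when passing to $Q_v$ all lie in the region between $v$ and the ``far'' boundary of $Q_v$ visible from $v$; since there are only two interior points, each vertex $v$ is associated to at least one of $\o1,\o2$, say via the pyramid-like region $\operatorname{conv}(F_v\cup\{v\})$ where $F_v$ is the union of facets of $Q_v$ visible from $v$, and some interior point of $P$ lies in the relative interior of this region but not in $Q_v$. A counting/averaging argument — each of $\o1,\o2$ can be ``blamed'' on only boundedly many vertices, because a single lattice point can lie in the relative interior of only one such pyramidal region per vertex and the geometry of $\o1,\o2$ being interior points of $P$ pins down the position of $v$ relative to them fairly rigidly — should yield that the number of vertices is at most four. (If $P$ had five or more vertices, one could delete a well-chosen vertex $v$ without losing either $\o1$ or $\o2$: the remaining vertices still ``surround'' both interior points, contradicting hollowness of $Q_v$.) This reduces us to $P$ being a tetrahedron.

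Once $P$ is known to be a tetrahedron with $\intr{P}\cap\Z^3=\{\o1,\o2\}$, the final claim is immediate: $P$ is a non-hollow tetrahedron, and its two interior lattice points trivially lie on the line through $\o1$ and $\o2$. Combined with the first reduction, this gives the dichotomy in the statement: either $P$ is a minimal $1$-point polytope (the case not a $2$-point polytope), or $P$ is a non-hollow tetrahedron whose interior lattice points lie on a line.

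The hard part will be making the vertex-count argument rigorous and clean: specifically, showing that a minimal three-dimensional polytope with $k=2$ interior points cannot have five or more vertices. The naive ``visibility pyramid'' heuristic needs care because the visible region $\operatorname{conv}(F_v\cup\{v\})$ can be awkward and the interior point witnessing non-hollowness need not be in the ``obvious'' place. I expect the correct route is: pick the vertex $v$ minimizing some functional (e.g. lattice distance to the segment $[\o1,\o2]$, or to the affine hull of a suitable facet), show that deleting it cannot expose $\o1$ or $\o2$ to the boundary when there are $\geq 5$ vertices by exhibiting the interior point as a genuine convex combination of the surviving vertices, and derive the contradiction. Alternatively one can invoke the known finiteness and explicit bounds on $2$-point polytopes from \S\ref{subsec:dim3} to bound the number of vertices a priori and then rule out the $5$- and $6$-vertex minimal cases by a finite check; but I would prefer the self-contained combinatorial argument.
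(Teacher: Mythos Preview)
Your proposal has a genuine gap at the first step. You assert that a minimal polytope which is not a $1$-point polytope must be a $2$-point polytope, appealing to Definition~\ref{def:minimal} and the finiteness statements of \S\ref{subsec:dim3}. Neither gives this. Definition~\ref{def:minimal} places no upper bound on $\abs{\intr{P}\cap\Z^3}$: it only requires that deleting any single vertex makes $P$ hollow. The statement of the proposition itself anticipates tetrahedra with more than two interior lattice points (it says ``whose interior points lie on a line'', not ``with two interior points''), so your reduction is at best unjustified and at worst false. Your vertex-counting heuristic then depends essentially on having exactly two interior points on which to ``blame'' the vertices; once that assumption is removed the argument has no force. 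Even restricted to the $2$-point case, the bound $\abs{\V{P}}\le 4$ is only a sketch---you concede the visibility-pyramid idea ``needs care'', and nothing you wrote rules out five vertices each of whose removal exposes one of $\o1,\o2$ to the boundary. The collinearity clause in the conclusion is likewise never addressed beyond the trivial two-point observation.

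The paper argues quite differently, via containment rather than a direct vertex bound. It shows that every non-hollow three-dimensional polytope $P$ contains, with vertices among $P\cap\Z^3$, either a $1$-point polytope or a non-hollow tetrahedron whose interior lattice points lie on a line; minimality of $P$ then forces $P$ to coincide with this sub-polytope. The construction picks any two interior lattice points $\o1,\o2$, projects $P$ along the line $\ell$ through them onto a transverse plane, prunes the image to a minimal polygon about $o=\pi(\o1)=\pi(\o2)$ (necessarily a triangle or a quadrilateral whose diagonals meet at $o$), lifts the vertices back, and runs a short case analysis on where the lifted configuration meets $\ell$. Collinearity in the tetrahedral case is automatic: the projected triangle has $o$ as its unique interior lattice point, so every interior lattice point of the tetrahedron lies over $o$, i.e.\ on $\ell$.
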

\begin{proof}
We will show that each non-hollow three-dimensional polytope $P$ contains either a minimal $1$-point polytope, or a non-hollow tetrahedron whose interior points lie on a line. Since the statement is trivial for $1$-point polytopes, we suppose that $\abs{\intr{P}\cap\Z^3}\geq 2$. Let $\o1,\o2\in\intr{P}\cap\Z^3$ be two interior lattice points such that the line segment with end-points $\o1$ and $\o2$ contains no interior lattice points. Let $\ell$ be the line passing through $\o1$ and $\o2$, and let $q_1,q_2\in\ell\cap\bdry{P}$ be the points of intersection of $\ell$ with the boundary of $P$. We label $q_1$ and $q_2$ such that $\o1$ lies between $q_1$ and $\o2$.

Project $P$ via a map $\pi$ onto a plane orthogonal to $\ell$. Then $\pi(P)$ is a polygon in the lattice $\pi(\Z^3)\cong\Z^2$ with $o:=\pi(\o1)=\pi(\o2)\in\intr{\pi(P)}$. Progressively eliminate vertices of $\pi(P)$, moving to the convex hull of the remaining lattice points. Eventually we obtain a minimal polygon $Q$ containing $o$ as the unique interior lattice point. This minimal polygon $Q$ is either a triangle or a quadrilateral whose diagonals intersect in $o$ (see~\cite[Corollary 2.3]{Kas10}). Denote these two possibilities by $\triangle$ (the case when $Q$ is a triangle) and $\square$ (the case when $Q$ is a quadrilateral). Before considering these two possibilities, we fix some notation: if $v$ and $w$ are two points in $\R^3$, we denote by $[v,w]$, $[v,w)$, $(v,w]$, and $(v,w)$ the intervals $\sconv{v,w}$, $\sconv{v,w}\setminus\{w\}$, $\sconv{v,w}\setminus\{v\}$, and $\sconv{v,w}\setminus\{v,w\}$, respectively.

\begin{enumerate}
\item[$\triangle$]
If $Q$ is a triangle, there exist (at least) three lattice points of $P$ such that their image under $\pi$ are the vertices of $Q$. Let $T$ be the the lattice triangle in $\Z^3$ given by taking the convex hull of these three points. By construction $T$ intersects $\ell$ at one point $p\in\intr{T}$. Up to exchanging the roles of $\o1$ and $\o2$, there are two possibilities:
\begin{enumerate}[label={$\triangle$.\arabic*}]
\item If $p\in [q_1,\o1)$ then $\conv{T\cup\{\o2\}}$ is a tetrahedron with at least one interior lattice point.
\item If $p\in [\o1,\o2)$ then there are two sub-cases:
\begin{enumerate}[label*=.\arabic*]
\item If $(\o2,q_2)\cap\Z^3=\emptyset$, let $F$ be the smallest-dimensional face of $P$ containing $q_2$. Then $\conv{T\cup F}$ is a $1$-point polytope.
\item If $\o3\in(\o2,q_2)\cap\Z^3$ then $\conv{T\cup\{\o3\}}$ is a tetrahedron with at least one interior lattice point.
\end{enumerate}
\end{enumerate}
\item[$\square$]
If $Q$ is a quadrilateral, each of the two pairs of non-adjacent vertices is collinear with $o$. By pulling back the vertices of $Q$ via $\pi$, we find four vertices of $P$ that can be split into two pairs, each pair coplanar with $\ell$. Let $E_1$ and $E_2$ be the edges connecting each of the pairs of vertices, and let $p_1$ and $p_2$ the respective intersection with $\ell$. Without loss of generality there are three distinct configurations to consider:
\begin{enumerate}[label={$\square$.\arabic*}]
\item If $p_1\in [q_1,\o1)$ and $p_2\in [q_1,\o1]$ then either $\conv{E_1\cup E_2\cup\{\o2\}}$ is a $1$-point polytope, or $\conv{E_1\cup E_2}$ is a tetrahedron with at least one interior lattice point.
\item If $p_1\in [\o1,\o2)$ and $p_2\in [\o1,\o2]$ then there are two sub-cases:
\begin{enumerate}[label*=.\arabic*]
\item If $(\o2,q_2)\cap\Z^3=\emptyset$, let $F$ be the smallest-dimensional face of $P$ containing $q_2$. Then $\conv{E_1\cup E_2\cup F}$ is a $1$-point polytope.
\item If $(\o2,q_2)\cap\Z^3$ is not empty, choose $\o3\in(\o2,q_2)\cap\Z^3$ to be the closest lattice point to $\o2$. Then $\conv{E_1\cup E_2\cup \{\o3\}}$ is a $1$-point polytope.
\end{enumerate}
\item If $p_1\in [q_1,\o1)$ and $p_2\in (\o1,q_2]$ then $\conv{E_1\cup E_2}$ is a tetrahedron with at least one interior lattice point.\qedhere
\end{enumerate}
\end{enumerate}
\end{proof}

A consequence of Proposition~\ref{prop:small} is that a three-dimensional $2$-point polytope will always contain either a three-dimensional $1$-point polytope or a $2$-point tetrahedron. The minimal $1$-point polytopes were classified in~\cite{Kas10}. The minimal $2$-point tetrahedra are listed in Table~\ref{tab:minimal_2_points}.
\begin{table}[tb]
\centering
\begin{tabular}{ll}
\toprule
\multicolumn{1}{c}{Vertices}&\multicolumn{1}{c}{Interior Points}\\
\cmidrule(lr){1-1} \cmidrule(lr){2-2}
$(0,0,0),(1,0,0),(0,1,0),(5,5,8)$&$(1,1,1),(2,2,3)$\\
$(0,0,0),(1,0,0),(0,1,0),(6,9,10)$&$(1,1,1),(2,3,3)$\\
$(0,0,0),(1,0,0),(0,1,0),(5,7,24)$&$(1,1,3),(2,3,9)$\\
$(0,0,0),(1,0,0),(0,2,0),(5,2,6)$&$(1,1,1),(2,1,2)$\\
$(0,0,0),(1,0,0),(0,2,0),(5,2,12)$&$(1,1,2),(2,1,4)$\\
\bottomrule
\end{tabular}
\vspace{0.5em}
\caption{The five minimal $2$-point tetrahedra, up to unimodular equivalence.}
\label{tab:minimal_2_points}
\end{table}
\section{Classification of two-point polytopes in dimension three}\label{sec:main}
In this section we describe the algorithm used to generate the classification of three-dimensional $2$-point polytopes. The algorithm is essentially inductive, starting with a ``seed'' of minimal $1$-point polytopes and minimal $2$-point tetrahedra. The inductive step corresponds to ``growing'' the known polytopes by successive addition of vertices. We show that, subject to the requirement that there are no more than two interior lattice points, there are only finitely many ways to grow a polytope. Of course, the fact that there are a finite number of unimodular equivalence-classes of polytopes with a fixed non-zero number of interior lattice points is well-known~\cite{LZ91}. Nevertheless, in the proof of Proposition~\ref{prop:grow} below we describe an algorithmic approach to obtain a complete list of such classes.

\begin{prop}\label{prop:grow}
Let $K$ be a positive integer, and $P$ be $d$-dimensional polytope in $\Z^d$ with $ 1\leq\abs{\intr{P}\cap\Z^d}\leq K$. Then the set
\[
\cS:=\left\{v\in\Z^d\ \Big|\ \abs{\intr{\conv{P\cup\{v\}}}\cap\Z^d}\leq K\right\}
\]
is finite. 
\end{prop}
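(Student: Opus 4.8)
The plan is to argue that if $v$ is a vertex that, together with $P$, produces a polytope with at most $K$ interior lattice points, then $v$ cannot be too far from $P$; the finiteness of $\cS$ then follows because only finitely many lattice points lie within any bounded region. First I would fix an interior lattice point $w\in\intr{P}\cap\Z^d$ (this exists since $\abs{\intr{P}\cap\Z^d}\geq 1$), and after translating assume $w=\orig$. Since $\orig\in\intr{P}$, there is a small rational simplex (or ball) $B\subset P$ with $\orig\in\intr{B}$; concretely one can pick lattice points $p_0,\dots,p_d\in P\cap\Z^d$ whose convex hull contains $\orig$ in its interior, or simply take an $\varepsilon$-ball $B=\varepsilon\Delta$ for suitable rational $\varepsilon>0$.

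The key geometric step is this: for any $v\in\Z^d$, the cone $C_v$ generated by $v$ with apex at a point of $\intr{B}$ meets the dilate $\tfrac12 v + \tfrac12 B$ (the image of $B$ under the homothety centred at $v$ with ratio $1/2$), and more generally the segment from $v$ towards $\orig$ sweeps out a region inside $\conv{P\cup\{v\}}$ containing translates of shrinking copies of $B$. The cleanest way to extract lattice points is: the convex hull $\conv{B\cup\{v\}}$ is contained in $\conv{P\cup\{v\}}$, and for each $j=1,\dots,N$ the point $v/j$ (if it happens to be lattice) or, better, a lattice point near the segment $[\orig,v]$ at parameter roughly $j/N$ lies in the interior of $\conv{B\cup\{v\}}$ provided the transverse ``width'' of this convex hull at that parameter exceeds $\sqrt d$ (so that, by a pigeonhole/covering argument, the slab of $\conv{B\cup\{v\}}$ at that height contains a lattice point in its relative interior). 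Since $B$ contains a ball of some fixed radius $r>0$ about $\orig$, the width of $\conv{B\cup\{v\}}$ at distance $t\|v\|$ from $v$ (with $t\in[0,1]$) is at least $r t$; hence once $t\geq \sqrt d / r$, i.e. for all points of $[\orig,v]$ lying at distance $\geq \sqrt d/r$ from $v$, the corresponding transverse slab contains an interior lattice point of $\conv{P\cup\{v\}}$. These lattice points, taken at unit spacing along the segment $[\orig,v]$, are pairwise distinct and all lie in $\intr{\conv{P\cup\{v\}}}$, so there are at least roughly $\|v\| - \sqrt d/r - C$ of them for a constant $C$ depending only on $P$. Imposing $\leq K$ forces $\|v\|\leq K + \sqrt d/r + C =: R$, a bound depending only on $P$ and $K$. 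Therefore $\cS\subseteq \{v\in\Z^d : \|v\|\leq R\}$, which is finite.

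The main obstacle I anticipate is making the ``transverse slab contains an interior lattice point'' claim fully rigorous and quantitative: one needs that a convex body in $\R^d$ of inradius exceeding some absolute constant (here $\sqrt d$, or more carefully the covering radius of $\Z^d$, which is $\tfrac12\sqrt d$) contains a lattice point in its interior, and one must ensure these chosen lattice points lie in the \emph{strict} interior of $\conv{P\cup\{v\}}$ rather than on its boundary — this is where using $B$ with $\orig$ in its \emph{open} interior, and taking the slabs strictly between $\orig$ and $v$, is essential. A secondary technical point is handling the lower-dimensional degeneration: if $\conv{P\cup\{v\}}$ fails to be full-dimensional the statement is about relative interior, but since $P$ itself is assumed $d$-dimensional, $\conv{P\cup\{v\}}$ is automatically $d$-dimensional, so this does not actually arise. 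Everything else — distinctness of the constructed lattice points, the linear growth estimate, and the final counting — is routine.
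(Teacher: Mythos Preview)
Your strategy---bound $\|v\|$ by showing that $\conv{P\cup\{v\}}$ acquires many interior lattice points as $\|v\|$ grows---is sound in spirit and quite different from the paper's, but the quantitative step does not work as written. At parameter $t\in[0,1]$ the cone $\conv{B_r(\orig)\cup\{v\}}$ has transverse radius $rt$, so the \emph{maximum} transverse radius anywhere along the cone is $r$, the inradius of $P$. This is a fixed constant independent of $\|v\|$, and for typical $P$ (for instance the $1$-point simplex $\sconv{e_1,\ldots,e_d,-e_1-\cdots-e_d}$) one has $r<\sqrt d/2$, the covering radius of $\Z^d$. Your condition ``$t\ge\sqrt d/r$'' is then vacuous since $t\le 1$, and no transverse slab is wide enough for the covering-radius argument to produce a lattice point. (There is also an algebra slip: ``$t\ge\sqrt d/r$'' corresponds to distance $t\|v\|\ge\sqrt d\,\|v\|/r$ from $v$, not $\sqrt d/r$.) The geometric point is that the cone is long and \emph{thin}---its half-angle tends to zero as $\|v\|\to\infty$---so unit slabs perpendicular to the axis need not contain lattice points. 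One can rescue the approach, e.g.\ by noting that $\Vol{\conv{P\cup\{v\}}}\ge c\,\|v\|$ and invoking a Hensley/Lagarias--Ziegler volume bound, but that is a different argument from the one you outline, and the obstacle you flagged as ``main'' is not merely a detail to be filled in.

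The paper's proof is constructive, which is essential for its algorithmic purpose. Fixing $o\in\intr{P}\cap\Z^d$ and a triangulation of $\bdry{P}$, for each $v\in\cS\setminus P$ the ray from $v$ through $o$ meets $\bdry{P}$ on the far side in the relative interior of some face $F$ of the triangulation; then $\conv{F\cup\{v\}}$ is a simplex of some dimension $n\le d$ containing $o$ in its relative interior and having at most $K$ interior lattice points, so its barycentric weight at $o$ lies in the finite set $\bigcup_{k=1}^K\cW_{n,k}$. Since $v$ is then determined by $F$, $o$, the weight and a permutation, this yields an explicit finite superset $\cR\supseteq\cS$ that can actually be enumerated---which is exactly what the classification algorithm needs.
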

\begin{proof}
We prove that $\cS$ is a subset of a finite set $\cR$. Let $o\in\intr{P}\cap\Z^d$ be one of the interior lattice points of $P$; without loss of generality we may assume that $o=\orig$ is equal to the origin of the lattice. Fix a triangulation of $\bdry{P}$ obtained without introducing new vertices. Let $F$ be any $(n-1)$-dimensional non-empty face of the triangulation, and let $\{v_1,\ldots,v_n\}$ denote the set of vertices of $F$. Let $\ulambda=(\lambda_0,\ldots,\lambda_n)$ be a weight in $\bigcup_{k=1}^K\cW_{n,k}$, and let $\tau\in\Sym(n+1)$ be a permutation of the integers $\{0,\ldots,n\}$. Define
\[
v_{F,o,\ulambda,\tau}:=-\frac{1}{\lambda_{\tau(0)}}\sum_{i=1}^n\lambda_{\tau(i)}v_i,
\]
and define $\cR$ to be the set of all points $v_{F,o,\ulambda,\tau}$ such that $v_{F,o,\ulambda,\tau}\in\Z^d$, for all possible choices of $\tau$, $\ulambda$, $o$, and $F$. Since there are only finitely many such choices, it follows that $\cR$ is finite as well.

Let $v\in\cS$. Excluding a finite number of lattice points, we may suppose that $v\notin P$. Let $o\in\intr{P}\cap\Z^d$, and let $\ell$ be the line passing through $o$ and $v$. The line $\ell$ intersects $\bdry{P}$ in two distinct points. Let $v'\in\ell\cap\bdry{P}$ be the point of intersection furthest from $v$, and let $F$ be the smallest face of the triangulation of $\bdry{P}$ containing $v'$. Then $S:=\conv{F\cup\{v\}}$ is a $k$-point lattice simplex of dimension $n\leq d$, where $1\leq k\leq K$. By translating $o$ to the origin, $S$ has weights in $\cW_{n,k}$. Hence $v\in\cR$.
\end{proof}

\begin{algorithm}[tb]
\DontPrintSemicolon 
\KwData{The set $\cP$ of all minimal three-dimensional $1$-point polytopes and all minimal $2$-point tetrahedra. The sets $\cW_{d,k}$, for $k=1,2$ and $d=1,2,3$, of possible weights for the $k$-point $d$-dimensional simplices.}
\KwResult{The set $\cB$ of three-dimensional $1$- and $2$-point polytopes.}
$\cA\leftarrow\cP$\;
$\cB\leftarrow\emptyset$\;
\While{$\cA\neq\emptyset$}{
$P\leftarrow\mathrm{RandomElement}(\cA)$\;
$\Delta\leftarrow\mathrm{RandomTriangulation}(\bdry{P})$\;
\For{$F\in\Delta$}{
$n\leftarrow\dim{F}+1$\;
$\{v_1,\ldots,v_n\}\leftarrow\V{F}$\;
\For{$\ulambda\in\cW_{n,1}\cup\cW_{n,2}$}{
\For{$\tau\in\Sym(n+1)$}{
$v\leftarrow-\frac{1}{\lambda_{\tau(0)}}\sum_{i=1}^n\lambda_{\tau(i)}v_i$\;
\If{$v\in\Z^3$}{
$Q\leftarrow\conv{P\cup\{v\}}$\;
\If{$\abs{\intr{Q}\cap\Z^3}\leq 2$}{
\If{$Q\notin\cB$ \emph{(up to unimodular equivalence)}}{
$\cA\leftarrow\cA\cup\{Q\}$
}
}
}
}
}
}
$\cA\leftarrow\cA\setminus\{P\}$\;
$\cB\leftarrow\cB\cup\{P\}$\;
}
\caption{Classifying the three-dimensional $1$- and $2$-point polytopes.}
\label{alg:classify}
\end{algorithm}

The proof of Proposition~\ref{prop:small} is constructive, and allows us to classify all three-dimensional $2$-point polytopes. This is described in Algorithm~\ref{alg:classify}, which we implemented in the computer algebra system \textsc{Magma}~\cite{BCP97}. Source code and output can be downloaded from~\cite{GRDb}. The resulting classification is summarised in Theorem~\ref{thm:main}. An immediate consequence of this classification is:

\begin{cor}
Conjecture~\ref{conj:volume_bound} holds when $d=3$ and $k=2$.
\end{cor}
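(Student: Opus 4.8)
The plan is to obtain this as an immediate corollary of Theorem~\ref{thm:main}, so that essentially no new work is needed beyond unwinding the definitions. First I would specialise the Zaks--Perles--Wilkes simplex~\eqref{eq:ZPW_simplex} to the case $d = 3$, $k = 2$. Using the initial values $s_1 = 2$, $s_2 = s_1 + 1 = 3$, and $s_3 = s_1 s_2 + 1 = 7$ of the Sylvester sequence, one gets
\[
S^3_2 = \sconv{\orig,\,2e_1,\,3e_2,\,18e_3},
\]
which is exactly the simplex $S_2^3$ appearing in Theorem~\ref{thm:main}. In this case the conjectural bound~\eqref{eq:conj} reads $\Vol{P} \leq (k+1)(s_3 - 1)^2 = 3\cdot 6^2 = 108$.

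Next I would invoke Theorem~\ref{thm:main} directly: it asserts that among all $2$-point three-dimensional lattice polytopes the simplex $S_2^3$ is the unique volume maximiser, with $\Vol{S_2^3} = 108$. Hence every $2$-point three-dimensional lattice polytope $P$ satisfies $\Vol{P} \leq 108$, with equality precisely when $P = S_2^3 = S^3_2$. This is exactly the assertion of Conjecture~\ref{conj:volume_bound} for $d = 3$, $k = 2$ (the exceptional case $d = 3$, $k = 1$ excluded in the conjecture is irrelevant here), and the corollary follows.

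The real content, of course, lies not in this deduction but in Theorem~\ref{thm:main} itself, whose proof is the exhaustive classification carried out via Algorithm~\ref{alg:classify}: the seed set of minimal $1$-point polytopes and minimal $2$-point tetrahedra (Proposition~\ref{prop:small}, Table~\ref{tab:minimal_2_points}), the finiteness of the growing step (Proposition~\ref{prop:grow}), and the weight data $\cW_{d,k}$ computed in~\S\S\ref{sec:weights1}--\ref{sec:weights2}. Thus the only obstacle is the computational one, already surmounted; once the classification is in hand, identifying the maximum volume and its unique attainer is simply a matter of inspecting the output.
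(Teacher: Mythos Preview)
Your proposal is correct and matches the paper's own approach: the corollary is stated there as an immediate consequence of the classification summarised in Theorem~\ref{thm:main}, with no further argument given. You have simply spelled out the verification that $(k+1)(s_3-1)^2=108$ and that $S^3_2$ coincides with the Zaks--Perles--Wilkes simplex, which is exactly the intended deduction.
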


The results can be compared against existing classifications. Algorithm~\ref{alg:classify} produces an independent check of the classification~\cite{Kas10} of $1$-point polytopes. The recent classification by Blanco--Santos~\cite{BS16_1,BS16_2,BS16_3} of those three-dimensional polytopes having $\abs{P\cap\Z^3}\leq 11$ overlaps in part with the classification of $2$-point polytopes; as stated in~\cite{BS16_3}, where these two classifications intersect, they coincide. Table~\ref{table:invariants} summarises the maximum values of some common invariants; Tables~\ref{table:invariants2} and~\ref{table:invariants3} sketch the distribution of $2$-point polytopes with respect to volume and number of vertices.
\begin{table}[H]
\centering
\begin{tabular}{lcc}
\toprule
\multicolumn{1}{c}{Invariant}&Maximum&Polytopes attaining maximum\\
\cmidrule(lr){1-1} \cmidrule(lr){2-2} \cmidrule(lr){3-3}
Normalised volume&$108$ &$S^3_2$\\
Normalised boundary volume&$102$&$S^3_2$\\
Number of lattice points&$55$&$S^3_2$\\
Number of vertices&$18$&$2$ polytopes\\
Number of edges&$30$&$2$ polytopes\\
Number of facets&$18$&$31$ polytopes\\
\bottomrule
\end{tabular}
\vspace{0.5em}
\caption{A summary of the maximum values amongst three-dimensional $2$-point polytopes for some common invariants.}
\label{table:invariants}
\end{table}
\begin{table}[H]
\centering
\begin{tikzpicture}
\begin{axis}[
	ybar interval, 
	ymax=900000,
	ymin=0,
	xmin=7,
	xmax=108,
	height=4cm,
	width=15cm,
	xtick={10,20,30,40,50,60,70,80,90,100,108},
	xtickmin=7,
	xtickmax=108,
	xticklabel style={xshift=-0.67cm},
	xlabel={},
	ylabel={},
	xlabel style={below right},
	ylabel style={above left}
]
\addplot coordinates {
(1,0)(2,0)(3,0)(4,0)(5,0)(6,0)(7,1)(8,2)(9,7)(10,14)(11,14)(12,82)(13,61)(14,209)(15,245)(16,585)(17,688)(18,1456)(19,1845)(20,3439)(21,4524)(22,7505)(23,9686)(24,15604)(25,19687)(26,29028)(27,36680)(28,51499)(29,62690)(30,85188)(31,100925)(32,131652)(33,153639)(34,192614)(35,219376)(36,269040)(37,299745)(38,355632)(39,390438)(40,452757)(41,486021)(42,552091)(43,582397)(44,645820)(45,671365)(46,728653)(47,742880)(48,793785)(49,795889)(50,831158)(51,822694)(52,845345)(53,820191)(54,829751)(55,793881)(56,788445)(57,741347)(58,724692)(59,670737)(60,647376)(61,589164)(62,558020)(63,500223)(64,467229)(65,411371)(66,379315)(67,330225)(68,301490)(69,258146)(70,232024)(71,195079)(72,174531)(73,145701)(74,129739)(75,106885)(76,94982)(77,77127)(78,67657)(79,53720)(80,45995)(81,35399)(82,29338)(83,21524)(84,17185)(85,11948)(86,9071)(87,5968)(88,4430)(89,2757)(90,2008)(91,1215)(92,915)(93,565)(94,435)(95,269)(96,231)(97,144)(98,116)(99,70)(100,60)(101,36)(102,29)(103,15)(104,12)(105,5)(106,4)(107,1)(108,1)
};
\end{axis}
\end{tikzpicture}
\caption{Distribution of three-dimensional $2$-point polytopes by (normalised) volume. The most common volume is $52$, attained by $845,\!345$ polytopes.}
\label{table:invariants2}
\end{table}
\begin{table}[H]
\centering
\begin{tikzpicture}
\begin{axis}[
	ybar interval, 
	ymax=7000000,
	ymin=0,
	xmin=4,
	xmax=19,
	height=4cm,
	width=15cm,
	xtick={4,5,6,7,8,9,10,11,12,13,14,15,16,17,18,19},
	xtickmin=4,
	xtickmax=19,
	xlabel={},
	ylabel={},
	xlabel style={},
	ylabel style={left}
]
\addplot coordinates {
(4,471)(5,12925)(6,145854)(7,819712)(8,2614254)(9,5052089)(10,6137658)(11,4736026)(12,2314410)(13,699524)(14,126763)(15,12925)(16,810)(17,26)(18,2)
};
\end{axis}
\end{tikzpicture}
\caption{Distribution of three-dimensional $2$-point polytopes by number of vertices. The most common number of vertices is $10$, attained by $6,\!137,\!658$ polytopes.}
\label{table:invariants3}
\end{table}

\section{Distribution of the \protect{$\h$}-vectors of three-dimensional polytopes}\label{sec:last}
From the classifications of $1$- and $2$-point polytopes we can extract the possible $\h$-vectors, and can compare their distribution with the known inequalities. In Figure~\ref{fig:distribution} we plot all occurring pairs of coefficients $(\h_1,\h_2)$ of $\h$-vectors for one and two interior lattice points. Inequalities~\eqref{eq:a} and~\eqref{eq:b} of~\S\ref{subsec:dim3} define a bounded region of the plane in which such pairs can appear. We also plot inequalities~\eqref{eq:a} and~\eqref{eq:b2}, i.e.\ $\h_3\leq\h_1\leq\h_2$ and $\Vol{P}\leq 36(\h_3+1)$.
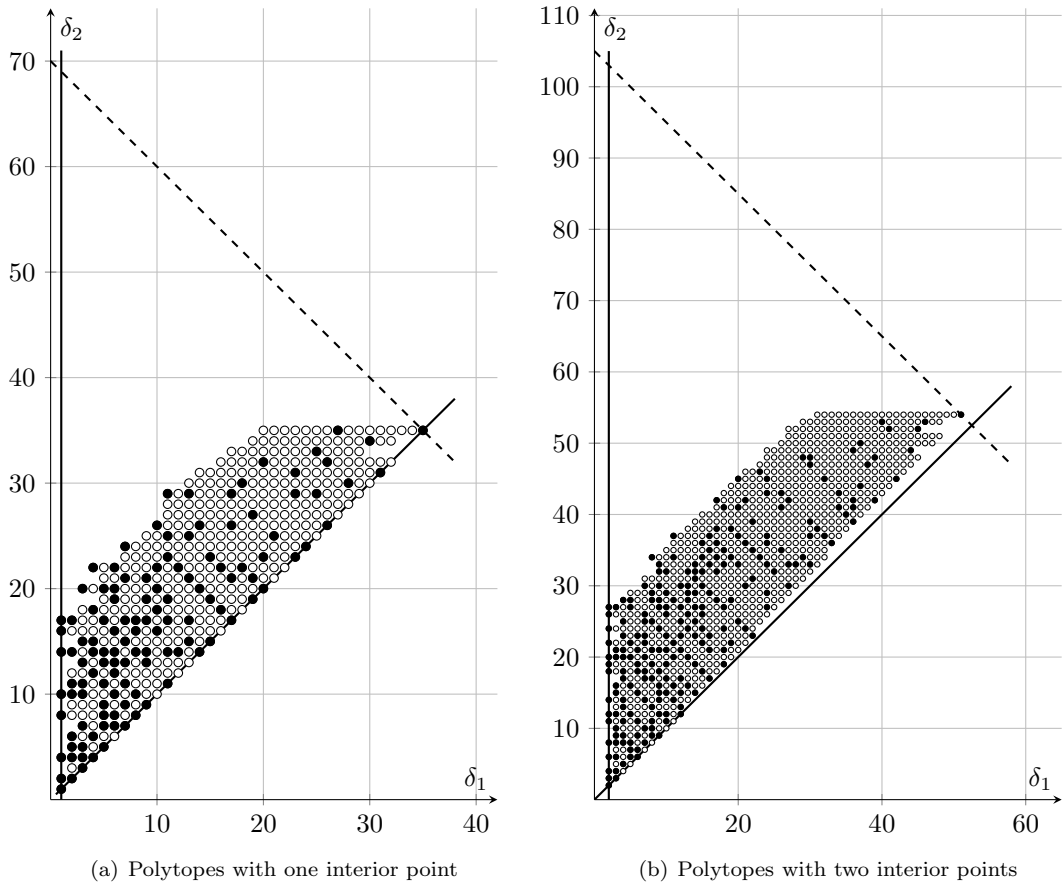
\begin{figure}[H]
\centering
\hfill
\subfigure[Polytopes with one interior point]{
\begin{tikzpicture}[trim axis left, trim axis right]
\begin{axis}[xmin=0,xmax=42,ymin=0,ymax=75,xlabel=$\h_1$,ylabel=$\h_2$,width=14cm,axis lines=middle,axis equal image,grid=both]

\addplot[black,mark=none,domain=0.5:38, thick]
{x} node[below,pos=0.5,yshift=22pt]{};



\addplot[black,dashed,mark=none,domain=0:38, thick]
{-x+70} node[below,pos=0.87]{};

\draw[black, thick] (axis cs:1,0) -- (axis cs:1,71) node [below,pos=0.94,xshift=9]{};
\pgfplotstableread{3_1.txt}\mydata;
\addplot [only marks,mark=*,mark options={scale=0.8,fill=white}] table [] {\mydata};

\pgfplotstableread{3s_1.txt}\mydata;
\addplot [color=black,only marks,mark=*,mark options={scale=0.8}] table [] {\mydata};
\end{axis}
\end{tikzpicture}
}
\hfill
\subfigure[Polytopes with two interior points]{
\begin{tikzpicture}[trim axis left, trim axis right]
\begin{axis}[xmin=0,xmax=65,ymin=0,ymax=111,xlabel=$\h_1$,ylabel=$\h_2$,width=14cm,axis lines=middle,axis equal image,grid=both]

\addplot[black,mark=none,domain=0:58, thick]
{x} node[below,pos=0.5,yshift=15pt]{};



\addplot[black,dashed,mark=none,domain=0:58, thick]
{-x+105} node[below,pos=0.87]{};


\draw[black, thick] (axis cs:2,0) -- (axis cs:2,105) node [below,pos=0.94,xshift=9]{};
\pgfplotstableread{3_2.txt}\mydata;
\addplot [only marks,mark=*,mark options={scale=0.5,fill=white}] table [] {\mydata};
\pgfplotstableread{3s_2.txt}\mydata;
\addplot [color=black,only marks,mark=*,mark options={scale=0.5}] table [] {\mydata};
\end{axis}
\end{tikzpicture}
}
\hfill\phantom{.}
\caption{The coefficients $(\h_1,\h_2)$ of the three-dimensional $1$- and $2$-point polytopes. The pairs $(\h_1,\h_2)$ attained by at least one simplex are denoted by $\bullet$. Inequalities~\eqref{eq:a} are marked with black lines, inequality~\eqref{eq:b2} with a dashed black line.}
\label{fig:distribution}
\end{figure}
Even assuming the conjectured volume inequality~\eqref{eq:b2} to be true, all the pairs $(\h_1,\h_2)$ appear in a much smaller region than the one delimited by~\eqref{eq:a} and~\eqref{eq:b2}. In particular, the $\h$-vector of the maximal simplices $S^3_1$ and $S^3_2$, given by $(1,35,35,1)$ and $(1,51,54,2)$ respectively, acts as a bound for the other $\h$-coefficients.

For all $k\geq 1$, the number lattice points of the Zaks--Perles--Wilkes simplex $S^3_k$ has been calculated in~\cite{ZPW82}, and we know that $\abs{S^3_k\cap\Z^3}=16k+23$. Since the volume and the number of interior points of $S^3_k$ are also known, we can compute their $\h$-vector: this is equal to $(1,16k+19,19k+16,k)$.

\begin{conjecture}\label{conj:3d}
For any $\h$-vector $(1,\h_1,\h_2,\h_3)$ of a non-hollow three-dimensional polytope $P$, the inequalities
\[
\h_1\leq 16\h_3+19\qquad\text{ and }\qquad\h_2\leq 19\h_3+16
\]
hold. Moreover, the first inequality is an equality if and only if $P=S^3_{\h_3}$.
\end{conjecture}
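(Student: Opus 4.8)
The idea is to establish the two inequalities separately and then to isolate the equality case. Writing $\Vol{P}=\h_0+\h_1+\h_2+\h_3=1+\h_1+\h_2+\h_3$ as in~\eqref{eq:ehr}, the two conjectured bounds together are equivalent to $\h_1+\h_2\leq 35\h_3+35$, i.e.\ to the volume bound~\eqref{eq:b2}, which is Conjecture~\ref{conj:volume_bound} in the case $d=3$. Note, however, that neither individual inequality follows from~\eqref{eq:b2} alone: one genuinely needs to understand how $\Vol{P}$ is distributed between $\h_1$ and $\h_2$. The first inequality is a Scott-type statement, since $\h_1=\abs{P\cap\Z^3}-4$ turns it into $\abs{P\cap\Z^3}\leq 16\,\abs{\intr{P}\cap\Z^3}+23$. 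The second is less transparent, but Ehrhart--Macdonald reciprocity in dimension three gives $\h_2=\abs{\intr{(2P)}\cap\Z^3}-4\h_3$, so it is equivalent to $\abs{\intr{(2P)}\cap\Z^3}\leq 23\h_3+16$, a bound on the lattice points of the second dilate of the interior.

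The first main step would be a reduction to simplices. As in the proof of Proposition~\ref{prop:small}, a non-hollow $3$-polytope $P$ contains a simplex obtained by peeling along a line through two interior lattice points, and (cf.\ Lemma~\ref{lem:subd}) that simplex still contains at least one interior point; iterating, one wants to pass from $P$ to a simplex $S\subseteq P$ with $\abs{\intr{S}\cap\Z^3}$, $\abs{S\cap\Z^3}$ and $\abs{\intr{(2S)}\cap\Z^3}$ controlled relative to the corresponding quantities for $P$. This is the subtle point already present in~\cite{AKN15} for $k=1$: dropping to a sub-simplex can annihilate interior points, so the reduction must be performed carefully enough that neither the number of interior points nor $\abs{P\cap\Z^3}$ degrades too far. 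The classifications for $k=1$ in~\cite{Kas10} and for $k=2$ in this paper show that the extremal polytopes are in fact simplices, which is the heuristic justification for expecting the reduction to be essentially lossless at the extreme.

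The second step is the simplex case, attacked through weights. After translating an interior point of a $k$-point tetrahedron $S=\sconv{v_0,\ldots,v_3}$ to $\orig$, the weights $(\lambda_0,\ldots,\lambda_3)$ govern $\Vol{S}$, while $\h_1$ and $\h_2$ count the lattice points of the half-open parallelepiped of $S$ with first coordinate $1$ and $2$, and can be read off from the $\lambda_i$ and the finite group $\Z^3/\langle v_0,\ldots,v_3\rangle$. One would induct on $k$ via the decomposition of Lemma~\ref{lem:subd}, which exhibits a $k$-point tetrahedron as the convex hull of a $(k-1)$-point piece and a $1$-point piece, using $\cW_{3,1}$ from~\cite{Kas10} and $\cW_{3,2}$ from Theorem~\ref{thm:3s2p} as the base data. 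The Sylvester sequence appears because $S^3_k$ maximises the growth $s_i=s_1\cdots s_{i-1}+1$ of the recursion that constrains the weights; this same extremality is what should force rigidity, with an equality $\h_1=16\h_3+19$ propagating down the induction to pin every weight, and hence $P$, to that of $S^3_{\h_3}$.

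The main obstacle is that the volume half of this programme --- Conjecture~\ref{conj:volume_bound} for $d=3$ and $k\geq 3$ --- is itself open, so what is sketched here is a line of attack rather than a proof: there is currently no method that bounds $\abs{P\cap\Z^3}$, let alone $\abs{\intr{(2P)}\cap\Z^3}$, for $k$-point $3$-polytopes uniformly in $k$. I expect the crux to be a sharp dichotomy by lattice width: one branch in which $P$ is flat, to be handled by slicing $P$ into lattice polygons and applying Scott's theorem (Theorem~\ref{thm:scott}) to each slice while summing the contributions, and one branch in which $P$ is wide, to be handled by the simplex reduction above. Controlling the error term in the slicing branch tightly enough to reach exactly $16\h_3+19$ and $19\h_3+16$, rather than merely $O(\h_3)$, is where the real difficulty lies; the classification of $2$-point polytopes in this paper should be read as the computational evidence that these are the correct constants.
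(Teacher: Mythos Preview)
The statement you are addressing is a \emph{conjecture}, and the paper does not prove it. In~\S\ref{sec:last} the authors simply formulate Conjecture~\ref{conj:3d} as the apparent envelope of the data in Figure~\ref{fig:distribution}, after computing the $\h$-vector $(1,16k+19,19k+16,k)$ of the Zaks--Perles--Wilkes simplex $S^3_k$ and observing that it dominates all $\h$-vectors in the $k=1$ and $k=2$ classifications. There is no argument in the paper beyond this computational evidence, and the authors note only that Conjecture~\ref{conj:3d} would imply Conjecture~\ref{conj:volume_bound} for $d=3$.

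Your write-up is therefore not comparable to a proof in the paper, because none exists. You have been honest about this: you call it a ``proof strategy'' and state explicitly that the volume half is open and that what you offer is a line of attack. That is the right framing. A few remarks on the content of the strategy itself:

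Your reformulations are correct and useful: $\h_1\leq 16\h_3+19$ is exactly $\abs{P\cap\Z^3}\leq 16\,\abs{\intr{P}\cap\Z^3}+23$, and the reciprocity computation giving $\h_2=\abs{\intr{(2P)}\cap\Z^3}-4\h_3$ is right, so the second inequality becomes $\abs{\intr{(2P)}\cap\Z^3}\leq 23\h_3+16$. These are good targets.

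The reduction-to-simplices step is where the strategy is genuinely incomplete, and you identify this. Passing to a subsimplex can destroy interior points, so one cannot simply invoke the simplex case of~\cite{AKN15}; some covering or decomposition argument that tracks both $\h_3$ and $\h_1$ (or $\h_2$) simultaneously would be needed, and no such argument is currently known. The width dichotomy you propose is plausible but the ``flat'' branch, slicing into polygons and summing Scott-type bounds, typically produces constants that are too loose; getting exactly $16$ and $19$ from such a slicing would require new ideas.

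In short: there is no gap relative to the paper, because the paper offers no proof. Your document is a reasonable sketch of how one might try to attack the conjecture, with the difficulties correctly flagged.
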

\noindent
Note that Conjecture~\ref{conj:3d} implies Conjecture~\ref{conj:volume_bound} in dimension three.

\subsection*{Acknowledgments}
GB is supported by the Stiftelsen GS Magnusons Fund, by a Jubileumsfond grant from the Knut and Alice Wallenbergs Foundation, and by Vetenskapsr{\aa}det grant~NT:2014-3991. AK is supported by EPSRC Fellowship~EP/N022513/1. The majority of this work was done during a visit of GB to AK, funded by the University of Nottingham's EPSRC Institutional Sponsorship~EP/N508822/1.
\begin{center}
\small
\begin{longtable}{c@{\ \ }cc@{\ \ }cc@{\ \ }cc@{\ \ }cc@{\ \ }c}
\caption{The $548$ distinct weights $(\lambda_0,\lambda_1,\lambda_2,\lambda_3)\in\cW_{3,2}$ occurring for the $471$ $2$-point tetrahedra. The weights are listed in order by the sum $h=\lambda_0+\lambda_1+\lambda_2+\lambda_3$.}
\label{tab:W32}\\
\toprule
Weights&$h$&Weights&$h$&Weights&$h$&Weights&$h$&Weights&$h$\\
\cmidrule(lr){1-2} \cmidrule(lr){3-4} \cmidrule(lr){5-6} \cmidrule(lr){7-8} \cmidrule(lr){9-10}
\endfirsthead
\multicolumn{10}{l}{\vspace{-0.7em}\tiny Continued from previous page.}\\
\addlinespace[1.7ex]
\midrule
Weights&$h$&Weights&$h$&Weights&$h$&Weights&$h$&Weights&$h$\\
\cmidrule(lr){1-2} \cmidrule(lr){3-4} \cmidrule(lr){5-6} \cmidrule(lr){7-8} \cmidrule(lr){9-10}
\endhead
\multicolumn{10}{r}{\raisebox{0.2em}{\tiny Continued on next page.}}\\
\endfoot
\bottomrule
\endlastfoot
\oddrow \oddrow $(1,1,1,1)$&$4$&$(1,1,1,2)$&$5$&$(1,1,1,3)$&$6$&$(1,1,2,2)$&$6$&$(1,1,1,4)$&$7$\\
\evnrow $(1,1,2,3)$&$7$&$(1,2,2,2)$&$7$&$(1,1,1,5)$&$8$&$(1,1,2,4)$&$8$&$(1,1,3,3)$&$8$\\
\oddrow $(1,2,2,3)$&$8$&$(1,1,1,6)$&$9$&$(1,1,2,5)$&$9$&$(1,1,3,4)$&$9$&$(1,2,2,4)$&$9$\\
\evnrow $(1,2,3,3)$&$9$&$(2,2,2,3)$&$9$&$(1,1,2,6)$&$10$&$(1,1,3,5)$&$10$&$(1,1,4,4)$&$10$\\
\oddrow $(1,2,2,5)$&$10$&$(1,2,3,4)$&$10$&$(2,2,3,3)$&$10$&$(1,1,2,7)$&$11$&$(1,1,4,5)$&$11$\\
\evnrow $(1,2,3,5)$&$11$&$(1,3,3,4)$&$11$&$(2,2,3,4)$&$11$&$(1,1,2,8)$&$12$&$(1,1,3,7)$&$12$\\
\oddrow $(1,1,4,6)$&$12$&$(1,2,3,6)$&$12$&$(1,2,4,5)$&$12$&$(1,3,3,5)$&$12$&$(1,3,4,4)$&$12$\\
\evnrow $(2,2,3,5)$&$12$&$(2,3,3,4)$&$12$&$(1,1,5,6)$&$13$&$(1,2,3,7)$&$13$&$(1,2,4,6)$&$13$\\
\oddrow $(1,3,4,5)$&$13$&$(2,3,3,5)$&$13$&$(1,1,5,7)$&$14$&$(1,2,3,8)$&$14$&$(1,2,4,7)$&$14$\\
\evnrow $(1,2,5,6)$&$14$&$(1,3,3,7)$&$14$&$(1,3,4,6)$&$14$&$(1,4,4,5)$&$14$&$(2,2,3,7)$&$14$\\
\oddrow $(2,3,4,5)$&$14$&$(1,1,3,10)$&$15$&$(1,1,6,7)$&$15$&$(1,2,3,9)$&$15$&$(1,2,5,7)$&$15$\\
\evnrow $(1,3,4,7)$&$15$&$(1,3,5,6)$&$15$&$(1,4,5,5)$&$15$&$(2,2,5,6)$&$15$&$(2,3,3,7)$&$15$\\
\oddrow $(2,3,4,6)$&$15$&$(2,3,5,5)$&$15$&$(3,3,4,5)$&$15$&$(1,1,6,8)$&$16$&$(1,2,3,10)$&$16$\\
\evnrow $(1,2,5,8)$&$16$&$(1,2,6,7)$&$16$&$(1,3,4,8)$&$16$&$(1,3,5,7)$&$16$&$(1,4,4,7)$&$16$\\
\oddrow $(1,4,5,6)$&$16$&$(2,2,5,7)$&$16$&$(2,3,3,8)$&$16$&$(2,3,4,7)$&$16$&$(2,3,5,6)$&$16$\\
\evnrow $(3,4,4,5)$&$16$&$(1,2,3,11)$&$17$&$(1,3,5,8)$&$17$&$(1,4,5,7)$&$17$&$(2,4,5,6)$&$17$\\
\oddrow $(1,1,7,9)$&$18$&$(1,2,3,12)$&$18$&$(1,2,6,9)$&$18$&$(1,2,7,8)$&$18$&$(1,3,4,10)$&$18$\\
\evnrow $(1,3,5,9)$&$18$&$(1,3,6,8)$&$18$&$(1,4,4,9)$&$18$&$(1,4,5,8)$&$18$&$(1,4,6,7)$&$18$\\
\oddrow $(1,5,6,6)$&$18$&$(2,2,5,9)$&$18$&$(2,3,4,9)$&$18$&$(2,3,5,8)$&$18$&$(2,3,6,7)$&$18$\\
\evnrow $(2,4,5,7)$&$18$&$(3,4,4,7)$&$18$&$(3,4,5,6)$&$18$&$(1,3,7,8)$&$19$&$(1,4,5,9)$&$19$\\
\oddrow $(1,5,6,7)$&$19$&$(2,3,5,9)$&$19$&$(1,1,8,10)$&$20$&$(1,2,5,12)$&$20$&$(1,2,7,10)$&$20$\\
\evnrow $(1,2,8,9)$&$20$&$(1,3,5,11)$&$20$&$(1,3,6,10)$&$20$&$(1,4,5,10)$&$20$&$(1,4,6,9)$&$20$\\
\oddrow $(1,5,6,8)$&$20$&$(2,3,5,10)$&$20$&$(2,3,7,8)$&$20$&$(2,4,5,9)$&$20$&$(2,5,5,8)$&$20$\\
\evnrow $(2,5,6,7)$&$20$&$(3,3,4,10)$&$20$&$(3,4,5,8)$&$20$&$(3,4,6,7)$&$20$&$(3,5,5,7)$&$20$\\
\oddrow $(4,4,5,7)$&$20$&$(4,5,5,6)$&$20$&$(1,3,4,13)$&$21$&$(1,3,7,10)$&$21$&$(1,4,6,10)$&$21$\\
\evnrow $(1,4,7,9)$&$21$&$(1,5,6,9)$&$21$&$(1,5,7,8)$&$21$&$(1,6,7,7)$&$21$&$(2,3,5,11)$&$21$\\
\oddrow $(2,3,7,9)$&$21$&$(2,5,6,8)$&$21$&$(2,5,7,7)$&$21$&$(3,4,5,9)$&$21$&$(3,4,7,7)$&$21$\\
\evnrow $(3,5,6,7)$&$21$&$(1,2,8,11)$&$22$&$(1,3,7,11)$&$22$&$(1,4,6,11)$&$22$&$(1,4,7,10)$&$22$\\
\oddrow $(1,6,7,8)$&$22$&$(2,2,7,11)$&$22$&$(2,3,6,11)$&$22$&$(2,3,7,10)$&$22$&$(2,4,5,11)$&$22$\\
\evnrow $(2,4,7,9)$&$22$&$(2,5,7,8)$&$22$&$(3,4,4,11)$&$22$&$(3,4,7,8)$&$22$&$(4,5,6,7)$&$22$\\
\oddrow $(1,4,7,11)$&$23$&$(1,5,6,11)$&$23$&$(2,3,7,11)$&$23$&$(2,5,7,9)$&$23$&$(3,4,5,11)$&$23$\\
\evnrow $(3,5,7,8)$&$23$&$(1,2,9,12)$&$24$&$(1,3,4,16)$&$24$&$(1,3,8,12)$&$24$&$(1,4,7,12)$&$24$\\
\oddrow $(1,4,8,11)$&$24$&$(1,4,9,10)$&$24$&$(1,5,6,12)$&$24$&$(1,6,7,10)$&$24$&$(1,6,8,9)$&$24$\\
\evnrow $(2,3,5,14)$&$24$&$(2,3,7,12)$&$24$&$(2,3,8,11)$&$24$&$(2,5,6,11)$&$24$&$(2,5,8,9)$&$24$\\
\oddrow $(2,6,7,9)$&$24$&$(3,4,5,12)$&$24$&$(3,4,6,11)$&$24$&$(3,4,8,9)$&$24$&$(3,5,7,9)$&$24$\\
\evnrow $(3,5,8,8)$&$24$&$(3,6,7,8)$&$24$&$(4,5,6,9)$&$24$&$(4,5,7,8)$&$24$&$(1,3,10,11)$&$25$\\
\oddrow $(1,5,7,12)$&$25$&$(1,6,7,11)$&$25$&$(1,6,8,10)$&$25$&$(1,7,8,9)$&$25$&$(2,5,7,11)$&$25$\\
\evnrow $(3,4,5,13)$&$25$&$(3,4,7,11)$&$25$&$(3,5,8,9)$&$25$&$(1,2,10,13)$&$26$&$(1,4,8,13)$&$26$\\
\oddrow $(1,5,7,13)$&$26$&$(2,3,8,13)$&$26$&$(2,4,7,13)$&$26$&$(2,5,6,13)$&$26$&$(2,5,7,12)$&$26$\\
\evnrow $(2,5,8,11)$&$26$&$(3,4,6,13)$&$26$&$(4,5,6,11)$&$26$&$(5,6,7,8)$&$26$&$(1,4,9,13)$&$27$\\
\oddrow $(1,5,9,12)$&$27$&$(1,6,7,13)$&$27$&$(1,7,9,10)$&$27$&$(2,3,5,17)$&$27$&$(2,3,9,13)$&$27$\\
\evnrow $(2,5,7,13)$&$27$&$(3,4,7,13)$&$27$&$(3,4,9,11)$&$27$&$(3,5,7,12)$&$27$&$(3,5,8,11)$&$27$\\
\oddrow $(3,7,8,9)$&$27$&$(4,5,7,11)$&$27$&$(4,6,7,10)$&$27$&$(1,2,11,14)$&$28$&$(1,3,10,14)$&$28$\\
\evnrow $(1,4,9,14)$&$28$&$(1,5,8,14)$&$28$&$(1,6,7,14)$&$28$&$(1,7,8,12)$&$28$&$(1,7,9,11)$&$28$\\
\oddrow $(1,8,9,10)$&$28$&$(2,3,7,16)$&$28$&$(2,3,9,14)$&$28$&$(2,5,7,14)$&$28$&$(2,5,8,13)$&$28$\\
\evnrow $(2,6,7,13)$&$28$&$(2,7,8,11)$&$28$&$(2,7,9,10)$&$28$&$(3,5,6,14)$&$28$&$(3,5,7,13)$&$28$\\
\oddrow $(4,5,6,13)$&$28$&$(4,5,7,12)$&$28$&$(2,7,9,11)$&$29$&$(3,5,8,13)$&$29$&$(1,2,12,15)$&$30$\\
\evnrow $(1,4,10,15)$&$30$&$(1,5,9,15)$&$30$&$(1,6,8,15)$&$30$&$(1,6,10,13)$&$30$&$(1,7,10,12)$&$30$\\
\oddrow $(1,8,10,11)$&$30$&$(2,3,5,20)$&$30$&$(2,3,10,15)$&$30$&$(2,4,9,15)$&$30$&$(2,5,8,15)$&$30$\\
\evnrow $(2,5,9,14)$&$30$&$(2,5,11,12)$&$30$&$(2,6,7,15)$&$30$&$(2,7,9,12)$&$30$&$(2,7,10,11)$&$30$\\
\oddrow $(3,4,5,18)$&$30$&$(3,4,8,15)$&$30$&$(3,5,7,15)$&$30$&$(3,5,8,14)$&$30$&$(3,5,10,12)$&$30$\\
\evnrow $(3,8,9,10)$&$30$&$(4,5,6,15)$&$30$&$(4,5,9,12)$&$30$&$(4,5,10,11)$&$30$&$(4,6,7,13)$&$30$\\
\oddrow $(5,6,9,10)$&$30$&$(6,7,8,9)$&$30$&$(3,7,10,11)$&$31$&$(5,6,7,13)$&$31$&$(1,6,9,16)$&$32$\\
\evnrow $(1,7,8,16)$&$32$&$(1,8,9,14)$&$32$&$(2,3,11,16)$&$32$&$(2,5,9,16)$&$32$&$(2,7,8,15)$&$32$\\
\oddrow $(3,5,8,16)$&$32$&$(3,8,10,11)$&$32$&$(4,5,7,16)$&$32$&$(4,5,9,14)$&$32$&$(4,7,10,11)$&$32$\\
\evnrow $(5,6,8,13)$&$32$&$(5,7,8,12)$&$32$&$(5,7,9,11)$&$32$&$(6,7,8,11)$&$32$&$(1,5,11,16)$&$33$\\
\oddrow $(1,6,11,15)$&$33$&$(1,8,11,13)$&$33$&$(1,9,11,12)$&$33$&$(2,7,11,13)$&$33$&$(3,4,11,15)$&$33$\\
\evnrow $(3,7,10,13)$&$33$&$(3,7,11,12)$&$33$&$(3,9,10,11)$&$33$&$(4,5,11,13)$&$33$&$(6,7,9,11)$&$33$\\
\oddrow $(1,3,13,17)$&$34$&$(1,5,11,17)$&$34$&$(1,6,10,17)$&$34$&$(1,7,9,17)$&$34$&$(2,4,11,17)$&$34$\\
\evnrow $(2,7,8,17)$&$34$&$(3,5,9,17)$&$34$&$(4,5,8,17)$&$34$&$(1,7,10,17)$&$35$&$(2,7,9,17)$&$35$\\
\oddrow $(2,7,11,15)$&$35$&$(3,5,13,14)$&$35$&$(3,8,10,14)$&$35$&$(4,5,7,19)$&$35$&$(4,7,9,15)$&$35$\\
\evnrow $(5,6,7,17)$&$35$&$(5,6,11,13)$&$35$&$(5,7,11,12)$&$35$&$(1,4,13,18)$&$36$&$(1,5,12,18)$&$36$\\
\oddrow $(1,6,11,18)$&$36$&$(1,7,10,18)$&$36$&$(1,8,9,18)$&$36$&$(1,9,10,16)$&$36$&$(1,9,12,14)$&$36$\\
\evnrow $(1,10,12,13)$&$36$&$(2,5,11,18)$&$36$&$(2,5,12,17)$&$36$&$(2,7,9,18)$&$36$&$(2,7,12,15)$&$36$\\
\oddrow $(2,8,9,17)$&$36$&$(2,9,11,14)$&$36$&$(2,9,12,13)$&$36$&$(3,4,11,18)$&$36$&$(3,7,8,18)$&$36$\\
\evnrow $(3,8,11,14)$&$36$&$(3,10,11,12)$&$36$&$(4,5,9,18)$&$36$&$(4,6,9,17)$&$36$&$(4,7,9,16)$&$36$\\
\oddrow $(4,9,10,13)$&$36$&$(4,9,11,12)$&$36$&$(5,6,7,18)$&$36$&$(5,6,9,16)$&$36$&$(5,8,9,14)$&$36$\\
\evnrow $(6,7,8,15)$&$36$&$(6,9,10,11)$&$36$&$(1,8,10,19)$&$38$&$(2,3,14,19)$&$38$&$(2,5,12,19)$&$38$\\
\oddrow $(2,7,10,19)$&$38$&$(2,8,9,19)$&$38$&$(4,6,9,19)$&$38$&$(1,6,13,19)$&$39$&$(1,7,13,18)$&$39$\\
\evnrow $(1,11,13,14)$&$39$&$(2,5,13,19)$&$39$&$(2,9,13,15)$&$39$&$(3,4,13,19)$&$39$&$(3,5,13,18)$&$39$\\
\oddrow $(3,7,13,16)$&$39$&$(3,8,13,15)$&$39$&$(3,11,12,13)$&$39$&$(4,5,13,17)$&$39$&$(4,7,13,15)$&$39$\\
\evnrow $(5,9,12,13)$&$39$&$(6,9,11,13)$&$39$&$(7,8,11,13)$&$39$&$(7,9,10,13)$&$39$&$(1,3,16,20)$&$40$\\
\oddrow $(1,6,13,20)$&$40$&$(1,7,12,20)$&$40$&$(1,8,11,20)$&$40$&$(1,10,13,16)$&$40$&$(2,5,13,20)$&$40$\\
\evnrow $(2,7,11,20)$&$40$&$(3,4,13,20)$&$40$&$(3,7,10,20)$&$40$&$(3,8,9,20)$&$40$&$(3,8,10,19)$&$40$\\
\oddrow $(3,10,11,16)$&$40$&$(3,10,13,14)$&$40$&$(4,7,9,20)$&$40$&$(4,7,10,19)$&$40$&$(5,7,8,20)$&$40$\\
\evnrow $(5,7,12,16)$&$40$&$(5,8,13,14)$&$40$&$(6,7,10,17)$&$40$&$(7,8,10,15)$&$40$&$(7,10,11,12)$&$40$\\
\oddrow $(8,9,10,13)$&$40$&$(1,6,14,21)$&$42$&$(1,8,12,21)$&$42$&$(1,9,11,21)$&$42$&$(1,12,14,15)$&$42$\\
\evnrow $(2,5,14,21)$&$42$&$(2,6,13,21)$&$42$&$(2,7,12,21)$&$42$&$(2,9,10,21)$&$42$&$(3,4,14,21)$&$42$\\
\oddrow $(3,5,13,21)$&$42$&$(3,7,11,21)$&$42$&$(3,8,10,21)$&$42$&$(3,8,14,17)$&$42$&$(3,12,13,14)$&$42$\\
\evnrow $(4,5,12,21)$&$42$&$(4,5,14,19)$&$42$&$(4,6,11,21)$&$42$&$(4,7,10,21)$&$42$&$(4,9,14,15)$&$42$\\
\oddrow $(5,6,14,17)$&$42$&$(5,7,9,21)$&$42$&$(5,11,12,14)$&$42$&$(6,7,8,21)$&$42$&$(6,9,13,14)$&$42$\\
\evnrow $(8,9,11,14)$&$42$&$(2,3,17,22)$&$44$&$(2,7,13,22)$&$44$&$(3,5,14,22)$&$44$&$(3,8,11,22)$&$44$\\
\oddrow $(3,11,14,16)$&$44$&$(4,7,11,22)$&$44$&$(4,11,14,15)$&$44$&$(6,7,9,22)$&$44$&$(6,8,11,19)$&$44$\\
\evnrow $(7,8,11,18)$&$44$&$(1,11,15,18)$&$45$&$(2,9,15,19)$&$45$&$(2,11,15,17)$&$45$&$(4,9,15,17)$&$45$\\
\oddrow $(5,9,14,17)$&$45$&$(6,11,13,15)$&$45$&$(7,9,10,19)$&$45$&$(8,9,13,15)$&$45$&$(1,7,15,23)$&$46$\\
\evnrow $(1,8,14,23)$&$46$&$(1,9,13,23)$&$46$&$(1,10,12,23)$&$46$&$(2,10,11,23)$&$46$&$(3,7,13,23)$&$46$\\
\oddrow $(4,5,14,23)$&$46$&$(4,9,10,23)$&$46$&$(5,7,11,23)$&$46$&$(6,7,10,23)$&$46$&$(1,7,16,24)$&$48$\\
\evnrow $(1,12,16,19)$&$48$&$(2,9,13,24)$&$48$&$(3,5,16,24)$&$48$&$(3,10,16,19)$&$48$&$(4,9,11,24)$&$48$\\
\oddrow $(5,6,13,24)$&$48$&$(5,6,16,21)$&$48$&$(5,8,11,24)$&$48$&$(6,11,15,16)$&$48$&$(7,12,13,16)$&$48$\\
\evnrow $(9,10,13,16)$&$48$&$(9,11,12,16)$&$48$&$(1,10,14,25)$&$50$&$(1,11,13,25)$&$50$&$(2,3,20,25)$&$50$\\
\oddrow $(2,7,16,25)$&$50$&$(2,9,14,25)$&$50$&$(2,11,12,25)$&$50$&$(3,8,14,25)$&$50$&$(4,10,11,25)$&$50$\\
\evnrow $(6,8,11,25)$&$50$&$(6,9,10,25)$&$50$&$(3,11,17,20)$&$51$&$(5,6,17,23)$&$51$&$(5,11,17,18)$&$51$\\
\oddrow $(9,11,14,17)$&$51$&$(1,8,17,26)$&$52$&$(1,9,16,26)$&$52$&$(2,7,17,26)$&$52$&$(3,4,19,26)$&$52$\\
\evnrow $(3,7,16,26)$&$52$&$(4,5,17,26)$&$52$&$(4,7,15,26)$&$52$&$(5,9,12,26)$&$52$&$(7,8,11,26)$&$52$\\
\oddrow $(7,9,10,26)$&$52$&$(1,8,18,27)$&$54$&$(1,12,14,27)$&$54$&$(2,7,18,27)$&$54$&$(2,12,13,27)$&$54$\\
\evnrow $(3,7,17,27)$&$54$&$(3,10,14,27)$&$54$&$(4,6,17,27)$&$54$&$(4,10,13,27)$&$54$&$(5,6,16,27)$&$54$\\
\oddrow $(5,8,14,27)$&$54$&$(6,8,13,27)$&$54$&$(6,10,11,27)$&$54$&$(7,8,12,27)$&$54$&$(1,11,16,28)$&$56$\\
\evnrow $(2,11,15,28)$&$56$&$(3,8,17,28)$&$56$&$(4,11,13,28)$&$56$&$(5,6,17,28)$&$56$&$(5,7,16,28)$&$56$\\
\oddrow $(5,11,12,28)$&$56$&$(7,8,13,28)$&$56$&$(8,9,11,28)$&$56$&$(3,10,16,29)$&$58$&$(4,7,18,29)$&$58$\\
\evnrow $(4,11,14,29)$&$58$&$(6,10,13,29)$&$58$&$(1,9,20,30)$&$60$&$(1,12,17,30)$&$60$&$(1,15,20,24)$&$60$\\
\oddrow $(3,7,20,30)$&$60$&$(3,8,19,30)$&$60$&$(3,11,16,30)$&$60$&$(4,9,17,30)$&$60$&$(6,15,19,20)$&$60$\\
\evnrow $(7,11,12,30)$&$60$&$(7,15,18,20)$&$60$&$(8,9,13,30)$&$60$&$(12,13,15,20)$&$60$&$(3,11,17,31)$&$62$\\
\oddrow $(5,7,19,31)$&$62$&$(5,9,17,31)$&$62$&$(5,12,14,31)$&$62$&$(6,11,14,31)$&$62$&$(7,11,13,31)$&$62$\\
\evnrow $(1,10,22,33)$&$66$&$(2,9,22,33)$&$66$&$(3,8,22,33)$&$66$&$(4,7,22,33)$&$66$&$(6,7,20,33)$&$66$\\
\oddrow $(6,13,14,33)$&$66$&$(4,11,19,34)$&$68$&$(5,8,21,34)$&$68$&$(5,13,16,34)$&$68$&$(8,11,15,34)$&$68$\\
\evnrow $(1,14,20,35)$&$70$&$(4,14,17,35)$&$70$&$(10,11,14,35)$&$70$&$(10,12,13,35)$&$70$&$(1,11,24,36)$&$72$\\
\oddrow $(5,7,24,36)$&$72$&$(7,13,16,36)$&$72$&$(8,11,17,36)$&$72$&$(1,12,26,39)$&$78$&$(2,11,26,39)$&$78$\\
\evnrow $(3,10,26,39)$&$78$&$(4,9,26,39)$&$78$&$(5,8,26,39)$&$78$&$(6,7,26,39)$&$78$&$(7,16,17,40)$&$80$\\
\oddrow $(3,11,28,42)$&$84$&$(5,9,28,42)$&$84$&$(4,11,30,45)$&$90$&$(7,18,20,45)$&$90$&$(10,17,18,45)$&$90$\\
\evnrow $(5,11,32,48)$&$96$&$(5,12,34,51)$&$102$&$(6,11,34,51)$&$102$&&&&\\
\end{longtable}
\end{center}
\bibliographystyle{amsplain}
\providecommand{\bysame}{\leavevmode\hbox to3em{\hrulefill}\thinspace}
\providecommand{\MR}{\relax\ifhmode\unskip\space\fi MR }
\providecommand{\MRhref}[2]{%
  \href{http://www.ams.org/mathscinet-getitem?mr=#1}{#2}
}
\providecommand{\href}[2]{#2}

\end{document}